\newcommand{\numberset}{\mathbb}
\newcommand{\N}{\numberset{N}}
\newcommand{\B}{\numberset{B}}
\newcommand{\Pk}{\numberset{P}}
\newcommand{\R}{\numberset{R}}
\renewcommand{\epsilon}{\varepsilon}
\renewcommand{\theta}{\vartheta}
\renewcommand{\rho}{\varrho}
\renewcommand{\phi}{\varphi}
{\left\lbrace\begin{array}{@{}l@{}}}%
{\end{array}\right.}
\theoremstyle{definition}
\theoremstyle{remark}
\newtheorem{test}{Test}[section]
\theoremstyle{remark}
\newtheorem{osservazione}{Remark}[section]
\newcommand{\rr}{\color{red}}
\theoremstyle{plain}
\newtheorem{teorema}{Theorem}[section]
\newtheorem{proposizione}{Proposition}[section]
\newtheorem{lemma}{Lemma}[section]
\newcommand\blfootnote[1]{%
  \begingroup
  \renewcommand\thefootnote{}\footnote{#1}%
  \addtocounter{footnote}{-1}%
  \endgroup
}
\author{L. Beir\~ao da Veiga 
\thanks{Dipartimento di Matematica,  Universit\`a degli Studi di Milano, Via Cesare Saldini 50 - 20133 Milano, email: lourenco.beirao@unimi.it} , 
\quad C. {Lovadina  
\thanks{Dipartimento di Matematica,  Universit\`a degli Studi di Pavia, Via Ferrata,1 - 27100 Pavia, email: carlo.lovadina@unipv.it } \quad and \,
G. Vacca  
\thanks{Dipartimento di Matematica,  Universit\`a degli Studi di Bari, Via Edoardo Orabona, 4 - 70125 Bari, email: giuseppe.vacca@uniba.it}}} 
\title{\textbf{Divergence free Virtual Elements for the Stokes problem on polygonal meshes}}
\date{}
\begin{document}
\maketitle

\begin{abstract}
In the present paper we develop a new family of Virtual Elements for the Stokes problem on polygonal meshes. 
By a proper choice of the Virtual space of velocities and the associated degrees of freedom, we can guarantee that the 
final discrete velocity is {\em pointwise} divergence-free, and not only in a relaxed (projected) sense, 
as it happens for more standard elements.
Moreover, we show that the discrete problem is immediately equivalent 
to a reduced problem with less degrees of freedom, thus yielding a very efficient scheme.
We provide a rigorous error analysis of the method and several numerical tests, 
including a comparison with a different Virtual Element choice.
\end{abstract}

\textbf{keywords:} Virtual element method, Polygonal meshes, Stokes Problem, Divergence free approximation

\blfootnote{Work performed while Giuseppe Vacca was visiting the Department of Mathematics of 
the University of Pavia, the support of which is gratefully acknowledged.}

%
%
%

\section{Introduction}
\label{sec:1}

The last decade has seen an increased interest
in developing numerical methods that 
can make use of general polygonal and polyhedral meshes, as opposed to more standard triangular/quadrilateral (tetrahedral/hexahedral) 
grids. Indeed, making use of polygonal meshes brings forth a range of advantages, including for instance automatic use of nonconforming grids, more efficient approximation of geometric data features, better domain meshing 
capabilities, more efficient and easier 
adaptivity, more robustness to mesh deformation, and others. This interest in the literature is also reflected in commercial codes, 
such as CD-Adapco, that have recently included polytopal meshes.

We refer to the recent papers and monographs 
\cite{%
BLS05bis,BLM11,BBL09,BLM11book,Bishop13,JA12,LMSXX,MS11,NBM09,RW12,POLY37,%
ST04,TPPM10,VW13,Wachspress11,DiPietro-Ern-1,Gillette-1,PolyDG-1} %
as a brief representative sample of the increasing list of technologies that make use of polygonal/polyhedral meshes. We mention 
here in particular the polygonal finite elements, that generalize finite elements to polygons/polyhedrons by making use of 
generalized non-polynomial shape functions, and the mimetic discretisation schemes, that combine ideas from the finite difference and 
finite element methods.

The Virtual Element Method (in short, VEM) has been recently introduced in \cite{volley} as a generalization of the finite element method to arbitrary element-geometry. 
The principal idea behind VEM is to use approximated discrete bilinear forms that require only integration of polynomials on the (polytopal) 
element in order to be computed. The resulting discrete solution is conforming and the accuracy granted by such discrete bilinear forms 
turns out to be sufficient to achieve the correct order of convergence. 
Following this approach, VEM is able to make use of very general polygonal/polyhedral meshes without the need to integrate 
complex non-polynomial functions on the elements and without loss of accuracy. Moreover, 
VEM is not restricted to low order converge and can be easily applied to three dimensions and use non convex (even non simply connected) 
elements.
The Virtual Element Method has been applied successfully for a large range of problems, for instance a non
exhaustive list being 
\cite{volley,Brezzi:Marini:plates,VEM-elasticity,projectors,BM13,BFMXX,VEM-stream,GTP14,BBPSXX,mora2015virtual, lovadina, vaccabis, giani}. 
A helpful paper for the implementation of the method is \cite{VEM-hitchhikers}.

The focus of the present paper is on developing a Virtual Element Method for the Stokes problem. In \cite{VEM-elasticity} 
the authors presented a family of Virtual Elements for the linear elasticity problem that are locking free in the incompressible limit. 
As a consequence, the scheme in \cite{VEM-elasticity} can be immediately extended to the Stokes problem, thus yielding a stable VEM family 
(that would be comparable to the Crousiex-Raviart finite element family). 

In the present paper, we develop instead a more efficient and, potentially, accurate method by exploiting in a new way the flexibility of 
the Virtual Element construction. Indeed, we define a new Virtual Element space of velocities such that the associated discrete kernel is 
also pointwise divergence-free. As a consequence, the final velocity discrete solution will have a true vanishing divergence, not only in a 
relaxed (projected) sense as it happens for standard finite elements. As we show in the numerical tests section, this seems to yield a better 
accuracy when compared to the Stokes extension of \cite{VEM-elasticity}; moreover, the divergence-free property is useful when more complex 
problems, such as Navier-Stokes, are considered.

In addition to the above feature, the proposed method carries an additional important advantage. By selecting suitable degrees of 
freedom (DoFs in the sequel), we obtain an automatic orthogonality condition among many pressure DoFs and the associated DoFs for the velocities. 
As a consequence, a large amount of degrees of freedom can be automatically eliminated from the system and one obtains a new reduced problem with 
less degrees of freedom: only one pressure DoF per element and very few internal-to-elements DoFs for the velocities. 

We finally note that the proposed problem is new also on triangles and quadrilaterals, 
allowing for new divergence-free (Virtual) elements with fewer degrees of freedom. 

In brief, the proposed family of Virtual Elements has three advantages: 1) it can be applied to general polygonal meshes, 
2) it yields an exactly divergence-free velocity, 3) it is efficient in terms of number of degrees of freedom. 
In the current work, after developing the method, we prove its stability and convergence properties. Finally, 
we test the method on some benchmark problems and compare it with the Stokes extension of \cite{VEM-elasticity}.

The paper is organized as follows. In Section \ref{sec:2} we introduce the model continuous Stokes problem. 
In Section \ref{sec:3} we present its VEM discretisation. In Section \ref{sec:4} we detail the theoretical features and the convergence analysis 
of the problem. In Section \ref{sec:5} we describe the reduced problem and its properties. 
Finally, in Section \ref{sec:6} we show the numerical tests.


\section{The continuous problem}
\label{sec:2}

We consider the Stokes Problem on a polygon $\Omega \subseteq \R^2$ with homogeneous Dirichlet boundary conditions:
\begin{equation}
\label{eq:stokes primale}
\left\{
\begin{aligned}
& \mbox{ find $(\mathbf{u},p)$ such that}& &\\
& -  \nu \, \boldsymbol{\Delta}    \mathbf{u}  -  \nabla p = \mathbf{f} \qquad  & &\text{in $\Omega$,} \\
& {\rm div} \, \mathbf{u} = 0 \qquad & &\text{in $\Omega$,} \\
& \mathbf{u} = 0  \qquad & &\text{on $\Gamma = \partial \Omega$,}
\end{aligned}
\right.
\end{equation}
where $\mathbf{u}$ and $p$ are the velocity and the pressure fields, respectively. 
Furthermore, $\boldsymbol{\Delta} $, ${\rm div}$, $\boldsymbol{\nabla}$, and $\nabla$ denote the vector Laplacian,  
the divergence, the gradient operator for vector fields and the gradient operator for scalar functions. Finally, $\mathbf{f}$ represents 
the external force, while $\nu$ is the viscosity.

Let us consider the spaces
\begin{equation}
\label{eq:spazi continui}
\mathbf{V}:= \left[ H_0^1(\Omega) \right]^2, \qquad Q:= L^2_0(\Omega) = \left\{ q \in L^2(\Omega) \quad \text{s.t.} \quad \int_{\Omega} q \,{\rm d}\Omega = 0 \right\} 
\end{equation}
with norms
\begin{equation}
\label{eq:norme continue}
\| \mathbf{v}\|_{1} := \| \mathbf{v}\|_{\left[ H^1(\Omega) \right]^2} \quad , \qquad 
\|q\|_Q := \| q\|_{L^2(\Omega)}. 
\end{equation}

We assume $\mathbf{f}\in[H^{-1}(\Omega)]^2$, and $\nu\in L^{0, \infty}(\Omega)$ uniformly positive in $\Omega$.
Let the bilinear forms $a(\cdot, \cdot) \colon \mathbf{V} \times \mathbf{V} \to \R$ and $b(\cdot, \cdot) \colon \mathbf{V} \times Q \to \R$  be defined by:
\begin{equation}
\label{eq:forma a}
a (\mathbf{u},  \mathbf{v}) := \int_{\Omega} \nu \, \boldsymbol{\nabla}   \mathbf{u} : \boldsymbol{\nabla}  \mathbf{v} \,{\rm d} \Omega, \qquad \text{for all $\mathbf{u},  \mathbf{v} \in \mathbf{V}$}
\end{equation}
\begin{equation}
\label{eq:forma b}
b(\mathbf{v}, q) :=  \int_{\Omega} {\rm div} \,\mathbf{v}\, q \,{\rm d}\Omega \qquad \text{for all $\mathbf{v} \in \mathbf{V}$, $q \in Q$.}
\end{equation}
Then a standard variational formulation of Problem \eqref{eq:stokes primale} is:
\begin{equation}
\label{eq:stokes variazionale}
\left\{
\begin{aligned}
& \text{find $(\mathbf{u}, p) \in \mathbf{V} \times Q$, such that} \\
& a(\mathbf{u}, \mathbf{v}) + b(\mathbf{v}, p) = (\mathbf{f}, \mathbf{v}) \qquad & \text{for all $\mathbf{v} \in \mathbf{V}$,} \\
&  b(\mathbf{u}, q) = 0 \qquad & \text{for all $q \in Q$,}
\end{aligned}
\right.
\end{equation}
where
\[
 (\mathbf{f}, \mathbf{v}) := \int_{\Omega} \mathbf{f} \cdot \mathbf{v} \, {\rm d} \Omega .
\]
It is well known that (see for istance \cite{BoffiBrezziFortin}):
\begin{itemize}
\item $a(\cdot, \cdot)$ and $b(\cdot, \cdot)$ are continuous, i.e.
\[
|a(\mathbf{u}, \mathbf{v})| \leq \|a\| \|\mathbf{u}\|_{1}\|\mathbf{v}\|_{1} \qquad \text{for all $\mathbf{u}, \mathbf{v} \in \mathbf{V}$,}
\] 
\[
|b(\mathbf{v}, q)| \leq \|b\| \|\mathbf{v}\|_{1} \|q\|_Q \qquad \text{for all $\mathbf{v} \in \mathbf{V}$ and $q \in Q$;} 
\]
\item $a(\cdot, \cdot)$ is coercive, i.e. there exists a positive constant $\alpha$ such that 
\[
a(\mathbf{v}, \mathbf{v}) \geq \alpha \|\mathbf{v}\|^2_{1} \qquad \text{for all $\mathbf{v} \in \mathbf{V}$;}
\]
\item  the bilinear form $b(\cdot,\cdot) $ satisfies the inf-sup condition, i.e.   
\begin{equation}
\label{eq:inf-sup}
\exists \, \beta >0 \quad \text{such that} \quad \sup_{\mathbf{v} \in \mathbf{V} \, \mathbf{v} \neq \mathbf{0}} \frac{b(\mathbf{u}, q)}{ \|\mathbf{v}\|_{1}} \geq \beta \|q\|_Q \qquad \text{for all $q \in Q$.}
\end{equation}
\end{itemize}
Therefore, Problem \eqref{eq:stokes variazionale} has a unique solution $(\mathbf{u}, p) \in \mathbf{V} \times Q$ such that
\[
 \|\mathbf{u}\|_{1} + \|p\|_Q \leq C \, \|\mathbf{f}\|_{H^{-1}(\Omega)}
\]
with the constant $C$ depending only on $\Omega$.


\section{Virtual formulation for the Stokes problem}
\label{sec:3}

\subsection{Decomposition and virtual element spaces}
\label{sub:3.1}
 
We outline the Virtual Element discretization of Problem \eqref{eq:stokes variazionale}. 
Here and in the rest of the paper the symbol $C$ will indicate a generic positive constant independent of the mesh size that may change at 
each occurrence. Moreover, given any subset $\omega$ in ${\mathbb R}^2$ and $k \in {\mathbb N}$, we will denote by $\Pk_k(\omega)$ the 
polynomials of total degree at most $k$ defined on $\omega$, with the extended notation $\Pk_{-1}(\omega)=\emptyset$.
Let $\set{\mathcal{T}_h}_h$ be a sequence of decompositions of $\Omega$ into general polygonal elements $K$ with
\[
 h_K := {\rm diameter}(K) , \quad
h := \sup_{K \in \mathcal{T}_h} h_K .
\]
We suppose that for all $h$, each element $K$ in $\mathcal{T}_h$ fulfils the following assumptions:
\begin{itemize}
\item $\mathbf{(A1)}$ $K$ is star-shaped with respect to a ball of radius $ \ge\, \gamma \, h_K$, 
\item $\mathbf{(A2)}$ the distance between any two vertexes of $K$ is $\ge c \, h_K$, 
\end{itemize}
where $\gamma$ and $c$ are positive constants. We remark that the hypotheses above, though not too restrictive in many practical cases, 
can be further relaxed, as noted in ~\cite{volley}. 

We also assume that the scalar field $\nu$ is piecewise constant with respect to the decomposition $\mathcal{T}_h$, i.e. $\nu$ is constant on each polygon $K \in \mathcal{T}_h$.

The bilinear forms $a(\cdot,\cdot)$ and $b(\cdot, \cdot)$, the norms $||\cdot||_{1}$ and $||\cdot ||_Q$, can be decomposed into local contributions. Indeed, using obvious notations, we have 
\begin{equation}
\label{eq:forma a locali continue}
a (\mathbf{u},  \mathbf{v}) =: \sum_{K \in \mathcal{T}_h} a^K (\mathbf{u},  \mathbf{v}) \qquad \text{for all $\mathbf{u},  \mathbf{v} \in \mathbf{V}$}
\end{equation}
\begin{equation}
\label{eq:forma b locali continue}
b (\mathbf{v},  q) =: \sum_{K \in \mathcal{T}_h} b^K (\mathbf{v},  q) \qquad \text{for all $\mathbf{v} \in \mathbf{V}$ and $q \in Q$,}
\end{equation}
and 
\begin{equation}
\label{eq:norme locali}
\|\mathbf{v}\|_{1} =: \left(\sum_{K \in \mathcal{T}_h} \|\mathbf{v}\|^2_{1, K}\right)^{1/2} \quad \text{for all $\mathbf{v} \in \mathbf{V}$,} \qquad \|q\|_Q =: \left(\sum_{K \in \mathcal{T}_h} \|q\|^2_{Q, K}\right)^{1/2} \quad \text{for all $q \in Q$.}
\end{equation}

For $k \in \N$, let us define the spaces 
\begin{itemize}
\item $\Pk_k(K)$ the set of polynomials on $K$ of degree $\le k$,
\item $\B_k(K) := \{v \in C^0(\partial K) \quad \text{s.t} \quad v_{|e} \in \Pk_k(e) \quad \forall\mbox{ edge } e \subset \partial K\}$,
\item $\mathcal{G}_{k}(K):= \nabla(\Pk_{k+1}(K)) \subseteq [\Pk_{k}(K)]^2$,
\item $\mathcal{G}_{k}(K)^{\perp} \subseteq [\Pk_{k}(K)]^2$ the $L^2$-orthogonal complement to $\mathcal{G}_{k}(K)$. 
%
\end{itemize}

On each element $K \in \mathcal{T}_h$ we define,  for $k \ge 2$, the following finite dimensional local virtual spaces
\begin{multline}
\label{eq:V_h^K}
\mathbf{V}_h^K := \biggl\{  
\mathbf{v} \in [H^1(K)]^2 \quad \text{s.t} \quad \mathbf{v}_{|{\partial K}} \in [\B_k(\partial K)]^2 \, , \biggr.
\\
\left.
\biggl\{
\begin{aligned}
& - \nu \, \boldsymbol{\Delta}    \mathbf{v}  -  \nabla s \in \mathcal{G}_{k-2}(K)^{\perp},  \\
& {\rm div} \, \mathbf{v} \in \Pk_{k-1}(K),
\end{aligned}
\biggr. \qquad \text{ for some $s \in L^2(K)$}
\quad \right\}
\end{multline}
and
\begin{equation}
\label{eq:Q_h^K}
Q_h^K := \Pk_{k-1}(K). 
\end{equation}
We note that all the operators and equations above are to be intended in the weak sense. In particular, 
the definition of $\mathbf{V}_h^K$ above is associated to a Stokes-like variational problem on $K$.

It is easy to observe that
$
[\Pk_k(K)]^2 \subseteq \mathbf{V}_h^K$,  
and it holds 
\begin{equation}
\label{eq:dimensioni}
 \dim\left([\B_k(\partial K)]^2\right) = 2n_K k, \quad \dim\left(\mathcal{G}_{k-2}(K)^{\perp}\right) = \frac{(k-1)(k-2)}{2} 
\end{equation}
where $n_K$ is the number of edges of the polygon $K$.

It is well-known (see for instance \cite{fortin1991mixed}) that given
\begin{itemize}
\item a polynomial function $\mathbf{g}_b \in [\B_k({\partial K})]^2$,
\item a polynomial function $\mathbf{h} \in \mathcal{G}_{k-2}(K)^{\perp}$,
\item a polynomial function $g \in \Pk_{k-1}(K)$  satisfying the compatibility condition
\begin{equation}\label{compcond}
\int_{K} g \, {\rm d}\Omega = \int_{\partial K} \mathbf{g}_b \cdot \mathbf{n} \, {\rm d}s, 
\end{equation}
\end{itemize}
there exists a unique couple $(\mathbf{v}, s)\in  \mathbf{V}_h^K\times  L^2(K) / \R$ such that
\begin{equation}\label{datasol}
\mathbf{v}_{|\partial K} = \mathbf{g}_b, \quad {\rm div} \, \mathbf{v} = g, \quad 
 - \nu \, \boldsymbol{\Delta}    \mathbf{v}  -  \nabla s = \mathbf{h}.
\end{equation}

Moreover, let us assume that there exist two different data sets
\[
(\mathbf{g}_{b}, \, \mathbf{h}, \, g) \quad \text{and} \quad  (\mathbf{c}_{b}, \, \mathbf{d}, \, c) \in [\B_k({\partial K})]^2 \times 
\mathcal{G}_{k-2}(K)^{\perp} \times \Pk_{k-1}(K), 
\]
both satisfying the compatibility conditions, which correspond respectively to the couples 
$(\mathbf{v}, \,s), (\mathbf{v}, \, t) \in \mathbf{V}_h^K 
\times L^2(K)$ (i.e. same velocity and different pressures). Then it is straightforward to see that 
\[ 
\mathbf{g}_{b} = \mathbf{c}_{b}, \qquad  g = c  \qquad \text{and} \qquad \nabla (s - t) =   \mathbf{d} - \mathbf{h}.
\]
Therefore, we get ${\rm rot} (\mathbf{d} - \mathbf{h}) = 0$, where ${\rm rot}$ is the rotational operator in 2D, i.e. the rotated divergence. 
Since ${\rm rot} \colon \mathcal{G}_{k-2}(K)^{\perp} \to \Pk_{k-3}(K)$ is an isomorphism (see \cite{conforming}), 
we conclude that $\mathbf{d} = \mathbf{h}$. Thus, there is an {\em injective} map $(\mathbf{g}_{b}, \, \mathbf{h}, \, g)\to \mathbf{v}$ 
that associates a given compatible data set   
$(\mathbf{g}_{b}, \, \mathbf{h}, \, g)$ to the velocity field $\mathbf{v}$ that solves~\eqref{datasol}.
It follows that the dimension of $\mathbf{V}_h^K$ is
\begin{equation}
\label{eq:dimensione V_h^K}
\begin{split}
\dim\left( \mathbf{V}_h^K \right) &= \dim\left([\B_k(\partial K)]^2\right) + \dim\left(\mathcal{G}_{k-2}(K)^{\perp}\right) + \left( \dim(\Pk_{k-1}(K)) - 1\right) \\
&= 2n_K k + \frac{(k-1)(k-2)}{2}  + \frac{(k+1)k}{2} - 1.
\end{split}
\end{equation}
For the local space $Q_h^K$  we have
\begin{equation}
\label{eq:dimensione Q_h^K}
\dim(Q_h^K) = \dim(\Pk_{k-1}(K))  = \frac{(k+1)k}{2}.
\end{equation}

We are now ready to introduce suitable sets of degrees of freedom for the local approximation fields.

Given a function $\mathbf{v} \in \mathbf{V}_h^K$ we take the following linear operators $\mathbf{D_V}$, split into four subsets:
\begin{itemize}
\item $\mathbf{D_V1}$:  the values of $\mathbf{v}$ at the vertices of the polygon $K$,
\item $\mathbf{D_V2}$: the values of $\mathbf{v}$ at $k-1$ distinct points of every edge $e \in \partial K$ (for example we can take the $k-1$ internal points of the $(k+1)$-Gauss-Lobatto quadrature rule  in $e$, as suggested in \cite{VEM-hitchhikers}),
\item $\mathbf{D_V3}$: the moments
\[
\int_K \mathbf{v} \cdot \mathbf{g}_{k-2}^{\perp} \, {\rm d}K \qquad \text{for all $\mathbf{g}_{k-2}^{\perp} \in \mathcal{G}_{k-2}(K)^{\perp}$,}
\]
\item $\mathbf{D_V4}$: the moments up to order $k-1$ and greater than zero of ${\rm div} \,\mathbf{v}$ in $K$, i.e.
\[
\int_K ({\rm div} \,\mathbf{v}) \, q_{k-1} \, {\rm d}K \qquad \text{for all $q_{k-1} \in \Pk_{k-1}(K) / \R$.}
\] 
\end{itemize}

Furthermore, for the local pressure, given  $q\in Q_h^K$, we consider the linear operators $\mathbf{D_Q}$:
\begin{itemize}
\item $\mathbf{D_Q}$: the moments up to order $k-1$ of $q$, i.e.
\[
\int_K q \, p_{k-1} \, {\rm d}K \qquad \text{for all $p_{k-1} \in \Pk_{k-1}(K)$.}
\]
\end{itemize}

Since it is obvious that $\mathbf{D_Q}$ is unisolvent with respect to $Q_h^K$, it only remains to prove the unisolvence of $\mathbf{D_V}$. 
We first prove the following Lemma; we recall that all the differential operators are to be intended in the weak sense.

\begin{lemma}
\label{lemma1}
Let $\mathbf{v} \in \mathbf{V}_h^K$ such that $\mathbf{D_V1}(\mathbf{v})=\mathbf{D_V2}(\mathbf{v})=\mathbf{D_V4}(\mathbf{v}) =0$. Then
\begin{equation}\label{eqlemma}
<\nabla \phi , \mathbf{v}>_{K} = 0 \qquad \text{for all $\phi \in L^2(K)$.}
\end{equation}
where, here and in the following, the brackets $<,>_{K}$ denote the duality pair between $H^1_0(K)^2$ and its dual $H^{-1}(K)^2$. 
\end{lemma}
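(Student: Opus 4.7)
The plan is to reduce the statement to showing that $\mathbf{v}$ vanishes on $\partial K$ and has zero divergence, and then to conclude by a straightforward integration by parts.

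First, I would exploit the boundary degrees of freedom. Since $\mathbf{v}_{|\partial K} \in [\mathbb{B}_k(\partial K)]^2$, on each edge $e \subset \partial K$ the trace of $\mathbf{v}$ is a polynomial of degree $\le k$. The values at the two endpoints (prescribed by $\mathbf{D_V1}$) together with the values at $k-1$ interior points of $e$ (prescribed by $\mathbf{D_V2}$) give $k+1$ nodal values, which uniquely determine a polynomial of degree $\le k$ on $e$. Vanishing of $\mathbf{D_V1}$ and $\mathbf{D_V2}$ therefore forces $\mathbf{v}_{|\partial K} = \mathbf{0}$, so that $\mathbf{v} \in [H^1_0(K)]^2$ and the duality pairing on the left-hand side of~\eqref{eqlemma} is well defined.

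Next, I would show that ${\rm div}\,\mathbf{v} = 0$. By the definition of $\mathbf{V}_h^K$, we have ${\rm div}\,\mathbf{v} \in \Pk_{k-1}(K)$, so it suffices to check that it has vanishing moments against a basis of $\Pk_{k-1}(K)$. The moments against $\Pk_{k-1}(K)/\R$ vanish by $\mathbf{D_V4}(\mathbf{v})=0$. For the remaining constant moment, the divergence theorem combined with $\mathbf{v}_{|\partial K} = \mathbf{0}$ yields
\[
\int_K {\rm div}\,\mathbf{v}\, {\rm d}K = \int_{\partial K} \mathbf{v}\cdot\mathbf{n}\, {\rm d}s = 0.
\]
Hence ${\rm div}\,\mathbf{v}\equiv 0$ in $K$.

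Finally, I would combine these two facts via integration by parts. For any $\phi \in L^2(K)$, the distribution $\nabla \phi$ lies in $[H^{-1}(K)]^2$, and its duality pairing against an element of $[H^1_0(K)]^2$ satisfies
\[
\langle \nabla \phi, \mathbf{v}\rangle_K = -\int_K \phi\, {\rm div}\,\mathbf{v}\, {\rm d}K,
\]
which is the standard extension of the classical Green formula. Since ${\rm div}\,\mathbf{v} = 0$, the right-hand side vanishes, giving the claim. The only slightly delicate point in the argument is justifying this last duality formula for merely $L^2$ pressures, but this is a textbook fact for $H^1_0$ vector fields with $L^2$ divergence, so I expect no real obstacle beyond making the argument clean.
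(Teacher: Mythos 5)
Your proposal is correct and follows essentially the same route as the paper's proof: vanishing of $\mathbf{D_V1}$ and $\mathbf{D_V2}$ gives $\mathbf{v}\in[H^1_0(K)]^2$, the divergence theorem shows ${\rm div}\,\mathbf{v}\in\Pk_{k-1}(K)$ has zero mean so that $\mathbf{D_V4}(\mathbf{v})=0$ forces $\int_K({\rm div}\,\mathbf{v})^2\,{\rm d}K=0$, and the duality identity $\langle\nabla\phi,\mathbf{v}\rangle_K=-\int_K\phi\,{\rm div}\,\mathbf{v}\,{\rm d}K$ concludes. The only differences are cosmetic (order of the steps and the extra detail on edge interpolation and on the duality formula).
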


\begin{proof}
It is clear that $\mathbf{D_V1}(\mathbf{v})=\mathbf{D_V2}(\mathbf{v}) =0$ implies $\mathbf{v}_{| \partial K} \equiv \mathbf{0}$. 
Therefore $\mathbf{v} \in H^1_0(K)$ and it holds
$$
<\nabla \phi , \mathbf{v}>_{K} = - \int_K ({\rm div} \, \mathbf{v}) \phi \, {\rm d}K .
$$
Now, since $\mathbf{v} \in \mathbf{V}_h^K$, there exists $p_{k-1} \in \Pk_{k-1}(K)$ such that  ${\rm div} \,\mathbf{v} = p_{k-1}$.
Furthermore, by the divergence Theorem, we infer that $p_{k-1} \in \Pk_{k-1}(K)/\R$. Since $\mathbf{D_V4}(\mathbf{v})=0$, we get:
\[
\int_K ({\rm div} \, \mathbf{v})^2 \, {\rm d}K = 
\int_K {\rm div} \, \mathbf{v} \, p_{k-1} \, {\rm d}K = 0 .
\]
Therefore, ${\rm div} \, \mathbf{v}=0$ and~\eqref{eqlemma} follows. 
\end{proof}
We now prove the following result. 
\begin{proposizione} 
\label{thm1}
The linear operators $\mathbf{D_V}$ are a unisolvent set of degrees of freedom for the virtual space $\mathbf{V}_h^K$.
\end{proposizione}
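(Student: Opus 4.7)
The plan is to combine the dimension count with an argument that vanishing of all DoFs implies $\mathbf{v}=0$. First I would check that the total number of linear functionals in $\mathbf{D_V}$ equals $\dim(\mathbf{V}_h^K)$: summing $2n_K$ (vertices) + $2n_K(k-1)$ (edge points) + $\tfrac{(k-1)(k-2)}{2}$ ($\mathbf{D_V3}$) + $\tfrac{(k+1)k}{2} - 1$ ($\mathbf{D_V4}$) exactly matches formula~\eqref{eq:dimensione V_h^K}. Thus it remains to show that $\mathbf{D_V}(\mathbf{v}) = 0$ forces $\mathbf{v} \equiv 0$.

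Suppose $\mathbf{v} \in \mathbf{V}_h^K$ with $\mathbf{D_V}(\mathbf{v})=0$. By Lemma \ref{lemma1}, vanishing of $\mathbf{D_V1}, \mathbf{D_V2}, \mathbf{D_V4}$ already yields $\mathbf{v} \in [H^1_0(K)]^2$ and $\mathrm{div}\,\mathbf{v} = 0$. Since $\mathbf{v} \in \mathbf{V}_h^K$, by definition there exists $s \in L^2(K)$ such that
\[
\mathbf{h} := -\nu\,\boldsymbol{\Delta}\mathbf{v} - \nabla s \in \mathcal{G}_{k-2}(K)^{\perp}.
\]
Testing this equation against $\mathbf{v}$ itself (which is legitimate because $\mathbf{v} \in [H^1_0(K)]^2$), and integrating by parts, we obtain
\[
\int_K \nu\,|\boldsymbol{\nabla}\mathbf{v}|^2 \,{\rm d}K + \int_K s \,\mathrm{div}\,\mathbf{v}\,{\rm d}K = \int_K \mathbf{h}\cdot\mathbf{v}\,{\rm d}K.
\]
The middle term vanishes since $\mathrm{div}\,\mathbf{v}=0$.

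The key final step is to kill the right-hand side using $\mathbf{D_V3}$. Since $\mathbf{h} \in \mathcal{G}_{k-2}(K)^{\perp}$, the vanishing of $\mathbf{D_V3}(\mathbf{v})$ applied with the test element $\mathbf{g}_{k-2}^{\perp} = \mathbf{h}$ gives $\int_K \mathbf{h}\cdot\mathbf{v}\,{\rm d}K = 0$. Combined with $\nu$ being a positive constant on $K$, we conclude $\boldsymbol{\nabla}\mathbf{v} = 0$, and then $\mathbf{v}|_{\partial K}=0$ forces $\mathbf{v}\equiv 0$ on $K$.

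I do not expect any serious obstacle; the main subtlety is simply recognizing that $\mathbf{D_V3}$ is tailored precisely to eliminate the residual right-hand side $\mathbf{h}$ coming from the Stokes-like definition of $\mathbf{V}_h^K$, and that the auxiliary pressure $s$ drops out because $\mathrm{div}\,\mathbf{v}$ has already been shown to vanish via Lemma \ref{lemma1}. All other ingredients (integration by parts, positivity of $\nu$, Poincar\'e for $H^1_0$) are standard.
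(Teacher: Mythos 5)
Your proof is correct and takes essentially the same route as the paper's: the dimension count reduces the claim to showing that vanishing DoFs force $\mathbf{v}=0$, Lemma~\ref{lemma1} neutralizes the pressure contribution, and $\mathbf{D_V3}$ kills the residual term in $\mathcal{G}_{k-2}(K)^{\perp}$, yielding $\int_K \nu\,|\boldsymbol{\nabla}\mathbf{v}|^2\,{\rm d}K=0$. The only cosmetic difference is that you use the intermediate fact ${\rm div}\,\mathbf{v}=0$ after integrating by parts, whereas the paper invokes the duality statement~\eqref{eqlemma} of Lemma~\ref{lemma1} directly.
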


\begin{proof}
We start noting that the dimension of $\mathbf{V}_h^K$ equals the number of functionals in $\mathbf{D_V}$ and thus we only need to show that 
if all the values $\mathbf{D_V}(\mathbf{v})$ vanish for a given $\mathbf{v} \in \mathbf{V}_h^K$, then $\mathbf{v}=0$.
Since $\mathbf{D_V1}(\mathbf{v})=\mathbf{D_V2}(\mathbf{v})=0$ implies $\mathbf{v} \equiv  \mathbf{0}$ on $\partial K$, 
we have $\mathbf{v} \in H^1_0(K)$. Therefore
\[
\int_K \nu \, \boldsymbol{\nabla}   \mathbf{v} : \boldsymbol{\nabla}  \mathbf{v} \,{\rm d}K = 
- \nu < \boldsymbol{\Delta}\mathbf{v}  , \mathbf{v} >_{K}.
\]

Moreover, since $\mathbf{v} \in \mathbf{V}_h^K$, there exists a scalar function 
$s \in  L^2(K)$ and $\mathbf{g}_{k-2}^{\perp} \in \mathcal{G}_{k-2}(K)^{\perp}$, such that
\[
\nu \, \boldsymbol{\Delta}    \mathbf{v} = - \nabla s - \mathbf{g}_{k-2}^{\perp} \quad \mbox{ in $H^{-1}(K)^2$}.
\]
Then
\begin{equation}\label{L:1}
\int_K \nu \, \boldsymbol{\nabla}   \mathbf{v} : \boldsymbol{\nabla}  \mathbf{v} \,{\rm d}K = 
<\nabla s , \mathbf{v} >_K 
+ \int_K \mathbf{g}_{k-2}^{\perp}\cdot \mathbf{v} \, {\rm d}K.
\end{equation}
The first term at the right-hand side is zero from Lemma~\ref{lemma1}, while the second term vanishes because of 
the assumption $\mathbf{D_V3}(\mathbf{v})=0$. Then $\mathbf{D_V}(\mathbf{v})=0$ implies $\mathbf{v}=0$, and the proof is complete.
\end{proof}

We now define the global virtual element spaces as
\begin{equation}
\label{eq:V_h}
\mathbf{V}_h := \{ \mathbf{v} \in [H^1_0(\Omega)]^2 \quad \text{s.t} \quad \mathbf{v}_{|K} \in \mathbf{V}_h^K  \quad \text{for all $K \in \mathcal{T}_h$} \}
\end{equation} 
and
\begin{equation}
\label{eq:Q_h}
Q_h := \{ q \in L_0^2(\Omega) \quad \text{s.t.} \quad q_{|K} \in  Q_h^K \quad \text{for all $K \in \mathcal{T}_h$}\},
\end{equation}
with the obvious associated sets of global degrees of freedom. A simple computation shows that it holds:
\begin{equation}\label{eq:Vdofs}
\dim(\mathbf{V}_h) = n_P \left( \frac{(k+1)k}{2} -1  +  \frac{(k-1)(k-2)}{2} \right) 
+ 2 (n_V + (k-1) n_E) 
\end{equation}
and
\begin{equation}\label{eq:Qdofs}
\dim(Q_h) = n_P \frac{(k+1)k}{2} - 1 ,
\end{equation}
where $n_P$ (resp.,  $n_E$ and $n_V$) is the number of elements (resp., internal edges and vertexes) in $\mathcal{T}_h$.

We also remark that
\begin{equation}\label{eq:divfree}
{\rm div}\, \mathbf{V}_h\subseteq Q_h .
\end{equation}
\begin{osservazione}
The space $\mathcal{G}_{k-2}(K)^{\perp}$ that defines the degrees of freedom $\boldsymbol{D_V3}$ can be replaced by any space $ \mathcal{G}_{k-2}(K)^{\oplus} \subseteq [\Pk_{k-2}(K)]^2$ that satisfies 
\[
[\Pk_{k-2}(K)]^2 = \mathcal{G}_{k-2}(K) \oplus \mathcal{G}_{k-2}(K)^{\oplus}.
\]
An example is given by the space $\mathcal{G}_{k-2}(K)^{\oplus}:= \mathbf{x}^{\perp} [\Pk_{k-3}(K)]^2$ with $\mathbf{x}^{\perp}:= (x_2, -x_1)$.
\end{osservazione}

\begin{osservazione}
\label{oss:3.1}
We have built a new $H^1$-conforming (vector valued) virtual space for the velocity vector field, different from the more standard one presented in \cite{VEM-elasticity} for the elasticity problem. In fact, the classical approach is to consider the local virtual space
\begin{equation}
\label{eq:V_h^Kclassic}
\widetilde{\mathbf{V}}_h^K := \left\{  
\mathbf{v} \in [H^1(K)]^2 \quad \text{s.t} \quad \mathbf{v}_{|{\partial K}} \in [\B_k(\partial K)]^2 \, , \quad
 \nu \, \boldsymbol{\Delta}    \mathbf{v}   \in [\Pk_{k-2}(K)]^2 \right\}
\end{equation}
with local degrees of freedom:
\begin{itemize}
\item $\mathbf{\widetilde{D}_V1}$: the values of $\mathbf{v}$ at each vertex of the polygon $K$,
\item $\mathbf{\widetilde{D}_V2}$: the values of $\mathbf{v}$ at $k-1$ distinct points of every edge $e \in \partial K$,
\item $\mathbf{\widetilde{D}_V3}$: the moments up to order $k-2$, i.e.
\[
\int_K \mathbf{v} \, \cdot  \mathbf{q}_{k-2} \, {\rm d}K \qquad \text{for all $\mathbf{q}_{k-2} \in [\Pk_{k-2}(K)]^2$.}
\] 
\end{itemize} 
It can be easily checked that, for all $k$, the dimension of the spaces \eqref{eq:V_h^K} and \eqref{eq:V_h^Kclassic} are the same. 
On the other hand our local virtual space~\eqref{eq:V_h^K} is, in some sense, designed to solve a Stokes-like Problem element-wise,  
while the virtual space in \eqref{eq:V_h^Kclassic} is designed to solve a classical Laplacian problem. As shown in the following, 
although both spaces can be used, the new choice \eqref{eq:V_h^K} is better for the problem under consideration.
\end{osservazione}


\subsection{The discrete bilinear forms}
\label{sub:3.2}

We now define discrete versions of the bilinear form $a(\cdot, \cdot)$ (cf.~\eqref{eq:forma a}), and of the bilinear form $b(\cdot, \cdot)$ (cf.~\eqref{eq:forma b}). For what concerns $b(\cdot, \cdot)$, we simply  set 

\begin{equation}\label{bhform}
b(\mathbf{v}, q) = \sum_{K \in \mathcal{T}_h} b^K(\mathbf{v}, q) = \sum_{K \in \mathcal{T}_h}  \int_K {\rm div} \, \mathbf{v} \, q \,{\rm d}K \qquad \text{for all $\mathbf{v} \in \mathbf{V}_h$, $q \in Q_h$},
\end{equation}
i.e. we do not introduce any approximation of the bilinear form. We notice that~\eqref{bhform}  
is computable from the degrees of freedom $\mathbf{D_V1}$, $\mathbf{D_V2}$ and $\mathbf{D_V4}$, since $q$ is polynomial in each element $K \in \mathcal{T}_h$. 
The construction of a computable approximation of the bilinear form $a(\cdot, \cdot)$ on the virtual space $\mathbf{V}_h$ is more involved.
First of all, we observe that  $\forall\,\mathbf{q}\in [\Pk_k(K)]^2 $ and $\forall\,\mathbf{v}\in \mathbf{V}_h^K $, the quantity   $a^K(\mathbf{q}, \mathbf{v})$ is exactly computable. Indeed, we have
\begin{equation}\label{eq:aKcomp}
a^K (\mathbf{q},  \mathbf{v}) = \int_{K} \nu \, \boldsymbol{\nabla}   \mathbf{q} : \boldsymbol{\nabla}  \mathbf{v} \,{\rm d}K = 
- \int_{K} \nu \, \boldsymbol{\Delta}   \mathbf{q}  \cdot \mathbf{v} \,{\rm d}K  + \int_{\partial K} (\nu \, \boldsymbol{\nabla}    \mathbf{q} \, \mathbf{n}) \cdot \mathbf{v} \,{\rm d}s.
\end{equation}
Since $ \nu \, \boldsymbol{\Delta}   \mathbf{q}  \in [\Pk_{k-2}(K)]^2$, there exists a unique $q_{k-1} \in \Pk_{k-1}(K) / \R$ and $\mathbf{g}_{k-2}^{\perp} \in \mathcal{G}_{k-2}^{\perp}(K)$, such that 
\begin{equation}\label{eq:dcomp}
\nu \, \boldsymbol{\Delta}   \mathbf{q}= \nabla q_{k-1} + \mathbf{g}_{k-2}^{\perp}.
\end{equation}
Therefore, we get
\begin{equation}\label{eq:aKcomp2}
\begin{split}
a^K (\mathbf{q},  \mathbf{v}) &= - \int_{K} \nabla q_{k-1} \cdot \mathbf{v} \,{\rm d}K    - \int_K  \mathbf{g}_{k-2}^{\perp} \cdot \mathbf{v} \,{\rm d}K + \int_{\partial K} (\nu \, \boldsymbol{\nabla}    \mathbf{q} \, \mathbf{n}) \cdot \mathbf{v} \,{\rm d}s \\
&= \int_{K}  q_{k-1} \, {\rm div} \, \mathbf{v} \,{\rm d}K -  \int_K \mathbf{g}_{k-2}^{\perp} \cdot \mathbf{v} \,{\rm d}K + \int_{\partial K} (\nu \, \boldsymbol{\nabla}    \mathbf{q} \, \mathbf{n} - q_{k-1} \mathbf{n}) \cdot \mathbf{v} \,{\rm d}s .
\end{split}
\end{equation}
The first term in the right-hand side is computable from $\mathbf{D_V4}$, the second term from $\mathbf{D_V3}$ and the boundary term from $\mathbf{D_V1}$ and $\mathbf{D_V2}$. 
However, for an arbitrary pair $(\mathbf{w},\mathbf{v} )\in \mathbf{V}_h^K \times \mathbf{V}_h^K $, the quantity $a_h^K(\mathbf{w}, \mathbf{v})$ is not computable.  
We now define a computable discrete local bilinear form
\begin{equation}
\label{eq:a_h^K} 
a_h^K(\cdot, \cdot) \colon \mathbf{V}_h^K \times \mathbf{V}_h^K \to \R
\end{equation}
approximating the continuous form $a^K(\cdot, \cdot)$, and satisfying the following properties:
\begin{itemize}
\item $\mathbf{k}$\textbf{-consistency}: for all $\mathbf{q} \in [\Pk_k(K)]^2$ and $\mathbf{v}_h \in \mathbf{V}_h^K$
\begin{equation}\label{eq:consist}
a_h^K(\mathbf{q}, \mathbf{v}_h) = a^K( \mathbf{q}, \mathbf{v}_h);
\end{equation}
\item \textbf{stability}:  there exist  two positive constants $\alpha_*$ and $\alpha^*$, independent of $h$ and $K$, such that, for all $\mathbf{v}_h \in \mathbf{V}_h^K$, it holds
\begin{equation}\label{eq:stabk}
\alpha_* a^K(\mathbf{v}_h, \mathbf{v}_h) \leq a_h^K(\mathbf{v}_h, \mathbf{v}_h) \leq \alpha^* a^K(\mathbf{v}_h, \mathbf{v}_h).
\end{equation}
\end{itemize}
For all $K \in \mathcal{T}_h$, we introduce the energy projection ${\Pi}_{k}^{\nabla,K} \colon \mathbf{V}_h^K \to [\Pk_k(K)]^2$, defined by 
\begin{equation}
\label{eq:Pi_k^K}
\left\{
\begin{aligned}
& a^K(\mathbf{q}_k, \mathbf{v}_h - \, {\Pi}_{k}^{\nabla,K}   \mathbf{v}_h) = 0 \qquad  \text{for all $\mathbf{q}_k \in [\Pk_k(K)]^2$,} \\
& P^{0,K}(\mathbf{v}_h - \,  {\Pi}_{k}^{\nabla,K}  \mathbf{v}_h) = \mathbf{0} \, ,
\end{aligned}
\right.
\end{equation} 
where $P^{0,K}$ is the $L^2$-projection operator onto the constant functions defined on $K$. It is immediate to check that the energy projection is well defined.  Moreover, it clearly holds ${\Pi}_{k}^{\nabla,K} \mathbf{q}_k = \mathbf{q}_k$ for all $\mathbf{q}_k \in \Pk_k(K)$.

\begin{osservazione}
Since $a^K(\mathbf{q}_k, \mathbf{v}_h)$ is computable (see \eqref{eq:aKcomp2} and the subsequent discussion), it follows that 
the operator ${\Pi}_{k}^{\nabla,K}$ is computable in terms of the degrees of freedom $\mathbf{D_V}$.
\end{osservazione}

As usual in the VEM framework, we now introduce a (symmetric) stabilizing bilinear form $\mathcal{S}^K \colon \mathbf{V}_h^K \times \mathbf{V}_h^K \to \R$, that satisfies
\begin{equation}
\label{eq:S^K}
c_* a^K(\mathbf{v}_h, \mathbf{v}_h) \leq  \mathcal{S}^K(\mathbf{v}_h, \mathbf{v}_h) \leq c^* a^K(\mathbf{v}_h, \mathbf{v}_h) \qquad \text{for all $\mathbf{v}_h \in \mathbf{V}_h$ such that ${\Pi}_{k}^{\nabla,K} \mathbf{v}_h= \mathbf{0}$}.
\end{equation}
Above,  $c_*$ and $c^*$ are two positive  constants, independent of $h$ and $K$.

Then, we can set
\begin{equation}
\label{eq:a_h^K def}
a_h^K(\mathbf{u}_h, \mathbf{v}_h) := a^K \left({\Pi}_{k}^{\nabla,K} \mathbf{u}_h, {\Pi}_{k}^{\nabla,K} \mathbf{v}_h \right) + \mathcal{S}^K \left((I -{\Pi}_{k}^{\nabla,K}) \mathbf{u}_h, (I -{\Pi}_{k}^{\nabla,K}) \mathbf{v}_h \right)
\end{equation}
for all $\mathbf{u}_h, \mathbf{v}_h \in \mathbf{V}_h^K$.

It is easy to see that Definition~\eqref{eq:Pi_k^K} and estimates~\eqref{eq:S^K} imply the consistency and the stability of the bilinear form $a_h^K(\cdot, \cdot)$.

\begin{osservazione}
Following standard argument of VEM techniques, we essentially require that the stabilizing term $\mathcal{S}^K(\mathbf{v}_h, \mathbf{v}_h)$ scales as $a^K(\mathbf{v}_h, \mathbf{v}_h)$. In particular, under our assumptions on the mesh,  the stabilizing term can be constructed using the tools presented in \cite{volley, VEM-hitchhikers}.
\end{osservazione}

Finally we define the global approximated bilinear form $a_h(\cdot, \cdot) \colon \mathbf{V}_h \times \mathbf{V}_h \to \R$ by simply summing the local contributions:
\begin{equation}
\label{eq:a_h}
a_h(\mathbf{u}_h, \mathbf{v}_h) := \sum_{K \in \mathcal{T}_h}  a_h^K(\mathbf{u}_h, \mathbf{v}_h) \qquad \text{for all $\mathbf{u}_h, \mathbf{v}_h \in \mathbf{V}_h$.}
\end{equation}

\subsection{Load term approximation}
\label{sub:3.3}

The last step consists in constructing a computable approximation of the right-hand side $(\mathbf{f}, \mathbf{v})$ 
in \eqref{eq:stokes variazionale}. Let $K \in \mathcal{T}_h$, and let $\Pi^{0,K}_{k-2} \colon [L^2(K)]^2 \to[\Pk_{k-2}(K)]^2$ be 
the $L^2(K)$ projection operator onto the space $[\Pk_{k-2}(K)]^2$. Then, we define the approximated load term $\mathbf{f}_h$ as 
\begin{equation}
\label{eq:f_h}
\mathbf{f}_h := \Pi_{k-2}^{0,K} \mathbf{f} \qquad \text{for all $K \in \mathcal{T}_h$,}
\end{equation}
and consider:
\begin{equation}
\label{eq:right}
(\mathbf{f}_h, \mathbf{v}_h)  = \sum_{K \in \mathcal{T}_h} \int_K \mathbf{f}_h \cdot \mathbf{v}_h \, {\rm d}K = \sum_{K \in \mathcal{T}_h} \int_K \Pi_{k-2}^{0,K} \mathbf{f} \cdot \mathbf{v}_h \, {\rm d}K = \sum_{K \in \mathcal{T}_h} \int_K \mathbf{f} \cdot \Pi_{k-2}^{0,K}  \mathbf{v}_h \, {\rm d}K.
\end{equation}
We observe that \eqref{eq:right} can be exactly computed for all $\mathbf{v}_h \in \mathbf{V}_h$. In fact,  $\Pi_{k-2}^{0,K}  \mathbf{v}_h$
is computable in terms of the degrees of freedom $\mathbf{D_V}$: for all $\mathbf{q}_{k-2} \in [\Pk_{k-2}(K)]^2$ we have
\[
\int_K  \Pi_{k-2}^{0,K}  \mathbf{v}_h\cdot \mathbf{q}_{k-2} \, {\rm d}K = \int_K    \mathbf{v}_h\cdot \mathbf{q}_{k-2} \, {\rm d}K = 
\int_K    \mathbf{v}_h\cdot \nabla q_{k-1} \, {\rm d}K  + \int_K    \mathbf{v}_h\cdot \mathbf{g}_{k-2}^{\perp} \, {\rm d}K 
\]
for suitable $q_{k-1} \in \Pk_{k-1}(K)$ and $\mathbf{g}_{k-2}^{\perp} \in \mathcal{G}_{k-2}(K)^{\perp}$. As a consequence, we get
\[
\int_K  \Pi_{k-2}^{0, K}  \mathbf{v}_h\cdot \mathbf{q}_{k-2} \, {\rm d}K = -\int_K  {\rm div} \, \mathbf{v}_h \, q_{k-1} \, {\rm d}K + \int_{\partial K} q_{k-1} \mathbf{v}_h \cdot \mathbf{n} \, {\rm d}s + \int_K   \mathbf{v}_h\cdot \mathbf{g}_{k-2}^{\perp} \, {\rm d}K , 
\]
and the right-hand side is directly computable from $\mathbf{D_V}$.

Furthermore, the following result concerning a $H^{-1}$-type norm, can be proved using standard arguments.

\begin{lemma}
\label{lemma2}
Let $\mathbf{f}_h$ be defined as in \eqref{eq:f_h},  and let us assume $\mathbf{f} \in H^{k-1}(\Omega)$. Then, for all $\mathbf{v}_h \in \mathbf{V}_h$, it holds
\[
\left|( \mathbf{f}_h - \mathbf{f}, \mathbf{v}_h ) \right| \leq C h^k |\mathbf{f}|_{k-1} \|\mathbf{v}_h\|_{1}.
\]
\end{lemma}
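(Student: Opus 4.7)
The plan is a classical polynomial-approximation-plus-orthogonality argument, carried out element by element and then summed with Cauchy--Schwarz.

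First I would localize: since $\mathbf{f}_h$ is defined piecewise by $\mathbf{f}_h{}_{|K} = \Pi_{k-2}^{0,K}\mathbf{f}$, I write
\[
(\mathbf{f}_h - \mathbf{f}, \mathbf{v}_h) = \sum_{K\in\mathcal{T}_h} \int_K (\Pi_{k-2}^{0,K}\mathbf{f} - \mathbf{f})\cdot \mathbf{v}_h\,{\rm d}K,
\]
and aim to bound each local term by $C h_K^{k} |\mathbf{f}|_{k-1,K}\,|\mathbf{v}_h|_{1,K}$, at which point summation and a discrete Cauchy--Schwarz in $K$ yield the claim (with $|\mathbf{v}_h|_1\leq \|\mathbf{v}_h\|_1$).

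The key trick in each local term is to use the defining orthogonality of the $L^2$-projection $\Pi_{k-2}^{0,K}$: since $k\ge 2$, the space $[\Pk_{k-2}(K)]^2$ contains the constants, so for any constant vector $\mathbf{c}_K\in\R^2$ we have $(\Pi_{k-2}^{0,K}\mathbf{f} - \mathbf{f},\mathbf{c}_K)_K = 0$. Choosing $\mathbf{c}_K := \Pi_0^{0,K}\mathbf{v}_h$ gives
\[
\int_K (\Pi_{k-2}^{0,K}\mathbf{f} - \mathbf{f})\cdot \mathbf{v}_h\,{\rm d}K = \int_K (\Pi_{k-2}^{0,K}\mathbf{f} - \mathbf{f})\cdot (\mathbf{v}_h - \Pi_0^{0,K}\mathbf{v}_h)\,{\rm d}K.
\]
Then Cauchy--Schwarz in $L^2(K)$ reduces the proof to two standard approximation estimates: the usual polynomial approximation bound for the $L^2$-projection, $\|\mathbf{f} - \Pi_{k-2}^{0,K}\mathbf{f}\|_{L^2(K)} \leq C h_K^{k-1}|\mathbf{f}|_{k-1,K}$, and the Poincar\'e-type estimate $\|\mathbf{v}_h - \Pi_0^{0,K}\mathbf{v}_h\|_{L^2(K)}\leq C h_K |\mathbf{v}_h|_{1,K}$, both of which are valid on star-shaped elements under assumptions $(\mathbf{A1})$--$(\mathbf{A2})$ and apply to $\mathbf{v}_h$ simply because $\mathbf{v}_h\in [H^1(K)]^2$.

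Multiplying the two gives the desired $Ch_K^k |\mathbf{f}|_{k-1,K}|\mathbf{v}_h|_{1,K}$ on each $K$, and summing completes the proof. There is no real obstacle here; the only point to double-check is that $\mathbb{P}_{k-2}$ contains constants (needed to subtract $\Pi_0^{0,K}\mathbf{v}_h$ for free), which is guaranteed by the standing assumption $k\ge 2$ used throughout the construction of $\mathbf{V}_h^K$.
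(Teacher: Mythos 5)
Your proof is correct and is precisely the ``standard argument'' that the paper invokes without writing out: localize, use the $L^2$-orthogonality of $\Pi_{k-2}^{0,K}$ against constants (available since $k\ge 2$) to subtract $\Pi_0^{0,K}\mathbf{v}_h$, and combine the Bramble--Hilbert bound $\|\mathbf{f}-\Pi_{k-2}^{0,K}\mathbf{f}\|_{0,K}\le C h_K^{k-1}|\mathbf{f}|_{k-1,K}$ with the Poincar\'e estimate $\|\mathbf{v}_h-\Pi_0^{0,K}\mathbf{v}_h\|_{0,K}\le C h_K|\mathbf{v}_h|_{1,K}$, both uniform under $(\mathbf{A1})$--$(\mathbf{A2})$. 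Nothing is missing; the element-wise Cauchy--Schwarz summation at the end is also handled correctly.
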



\subsection{The discrete problem}\label{sec:discrete}

We are now ready to state the proposed discrete problem. Referring to~\eqref{bhform}, \eqref{eq:a_h} and~\eqref{eq:f_h}-\eqref{eq:right}, we consider the virtual element problem:
\begin{equation}
\label{eq:stokes virtual}
\left\{
\begin{aligned}
& \text{find $(\mathbf{u}_h, p_h) \in \mathbf{V}_h \times Q_h$, such that} \\
& a_h(\mathbf{u}_h, \mathbf{v}_h) + b(\mathbf{v}_h, p_h) = (\mathbf{f}_h, \mathbf{v}_h) \qquad & \text{for all $\mathbf{v}_h \in \mathbf{V}_h$,} \\
&  b(\mathbf{u}_h, q_h) = 0 \qquad & \text{for all $q_h \in Q_h$.}
\end{aligned}
\right.
\end{equation}
By construction (see~\eqref{eq:stabk}, \eqref{eq:S^K} and~\eqref{eq:a_h^K def}) the discrete bilinear form $a_h(\cdot,\cdot)$ is (uniformly) 
stable with respect to the $\mathbf{V}$ norm. Therefore, the existence and the uniqueness of the solution to 
Problem~\eqref{eq:stokes virtual} will follow if a suitable inf-sup condition is fulfilled, which is the topic of Section~\ref{sub:4.1}.

We also remark that the second equation of~\eqref{eq:stokes virtual}, along with property~\eqref{eq:divfree}, implies that the discrete 
velocity $\mathbf{u}_h\in \mathbf{V}_h$ is {\em exactly divergence-free}. 
More generally, introducing the kernels:
\begin{equation}
\label{eq:Z}
\mathbf{Z} := \{\mathbf{v} \in \mathbf{V} \quad \text{s.t.}  \quad  b(\mathbf{v}, q) =0 \quad \text{for all $q \in Q$}\}
\end{equation}
and
\begin{equation}
\label{eq:Z_h}
\mathbf{Z}_h := \{ \mathbf{v}_h \in \mathbf{V}_h \quad \text{s.t.} \quad b(\mathbf{v}_h, q_h) = 0 \quad \text{for all $q_h \in Q_h$}\},
\end{equation}
it is immediate to check that
\begin{equation}\label{kernincl}
\mathbf{Z}_h \subseteq \mathbf{Z} .
\end{equation}
%


\section{Theoretical results}
\label{sec:4}

We begin by proving an approximation result for the virtual local space $\mathbf{V}_h$. First of all, let us recall a classical result by Scott-Dupont (see \cite{MR2373954}).

\begin{lemma}
\label{lm:scott}
Let $K \in \mathcal{T}_h$, then for all $\mathbf{u} \in [H^{s+1}(K)]^2$ with $0 \leq s \leq k$, there exists a  polynomial function $\mathbf{u}_{\pi} \in [{\Pk}_k(K)]^2$,  such that
\begin{equation}
\label{eq:scott}
\|\mathbf{u} - \mathbf{u}_{\pi}\|_{0,K} + h_K |\mathbf{u} -\mathbf{u}_{\pi} |_{1,K} \leq C h_K^{s+1}| \mathbf{u}|_{s+1,K}.
\end{equation}
\end{lemma}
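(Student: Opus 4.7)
The plan is to realize $\mathbf{u}_\pi$ as the (componentwise) averaged Taylor polynomial of $\mathbf{u}$ over a ball $B \subset K$ with respect to which $K$ is star-shaped; assumption $\mathbf{(A1)}$ guarantees $\mathrm{radius}(B) \ge \gamma\, h_K$, and this is the only geometric input required. Since the construction and the estimates decouple componentwise, it suffices to treat a scalar $u \in H^{s+1}(K)$ and then apply the result to each component of $\mathbf{u}$.

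First, for $\mathbf{y}\in B$ let $T^k_{\mathbf{y}} u(\mathbf{x})$ denote the Taylor polynomial of $u$ of degree $k$ at $\mathbf{y}$, and define
\[
Q^k u(\mathbf{x}) := \frac{1}{|B|}\int_B T^k_{\mathbf{y}} u(\mathbf{x}) \, \mathrm{d}\mathbf{y}.
\]
Star-shapedness of $K$ with respect to $B$ ensures that every line segment connecting $\mathbf{x}\in K$ to $\mathbf{y}\in B$ lies in $K$, so the integral form of the remainder $u(\mathbf{x})-T^k_{\mathbf{y}} u(\mathbf{x})$ is well-defined and involves only derivatives of $u$ evaluated inside $K$. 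By definition $Q^k u \in \Pk_k(K)$, and a direct check using $T^k_{\mathbf{y}} p = p$ for every $p\in\Pk_k$ shows that $Q^k$ reproduces polynomials of degree $\le k$.

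Next, I would establish the Bramble--Hilbert type bound
\[
|u - Q^k u|_{m,K} \le C\, h_K^{s+1-m}\,|u|_{s+1,K}, \qquad 0\le m \le s+1,
\]
with $C$ depending only on $k$ and on the chunkiness parameter $\gamma$. This is the heart of the argument: writing the averaged remainder as a Riesz-type integral operator against $D^{s+1} u$ (via Sobolev's integral representation of $u-Q^k u$), and controlling the resulting singular integral by Hardy/Young inequalities on the star-shaped domain. Alternatively, one can run the standard compactness/quotient argument on a reference configuration (where $\mathrm{diam}\,K \sim 1$ and $\mathrm{radius}(B)\sim\gamma$) and then scale back to $K$ by the affine map, picking up the powers of $h_K$. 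Taking $m=0$ and $m=1$ and setting $\mathbf{u}_\pi := Q^k \mathbf{u}$ componentwise yields \eqref{eq:scott}.

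The main obstacle is step three: controlling the kernel of the remainder operator on a general star-shaped (possibly non-convex) polygon. Convexity would make the bound essentially immediate via line-integral estimates, but under $\mathbf{(A1)}$ alone the constants depend delicately on $\gamma$, and one must verify that the bound on the Riesz potential / line-integral operator depends only on the chunkiness and not on any finer geometric feature of $K$. Once this is in place, assumption $\mathbf{(A2)}$ is not needed for this particular estimate, and summing/translating the scalar result to the vector-valued setting is routine.
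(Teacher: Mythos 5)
The paper offers no proof of this lemma at all --- it simply cites the classical Dupont--Scott result (see \cite{MR2373954}) --- and your proposal is exactly the standard argument behind that citation: the averaged Taylor polynomial over the ball from assumption $\mathbf{(A1)}$, the Sobolev/Riesz-potential representation of the remainder, and constants controlled only by the chunkiness parameter $\gamma$, with $\mathbf{(A2)}$ indeed playing no role. The one detail to tidy up is that for $s<k$ the pointwise Taylor polynomial $T^k_{\mathbf{y}}u$ involves derivatives of order up to $k$ that $u\in H^{s+1}(K)$ need not possess, so you should either take the averaged Taylor polynomial of degree $s$ (which still lies in $\Pk_k(K)$, as the statement requires) or replace the plain average $|B|^{-1}\chi_B$ by a smooth weight supported in $B$ so that the operator extends to all of $L^1(K)$ by moving the derivatives onto the weight.
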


We have the following proposition.

\begin{proposizione}
\label{thm:interpolation}
Let $\mathbf{u} \in \mathbf{V} \cap [H^{s+1}(\Omega)]^2$ with $0 \leq s \leq k$. Under the assumption $\mathbf{(A1)}$ and $\mathbf{(A2)}$ 
on the decomposition $\mathcal{T}_h$, there exists $\mathbf{u}_I \in \mathbf{V}_h$  such that
\begin{equation}
\label{eq:interpolation}
\|\mathbf{u} - \mathbf{u}_I\|_{0,K} + h_K| \mathbf{u} - \mathbf{u}_I |_{1,K} \leq C h_K^{s+1}|\mathbf{u}|_{s+1,D(K)}
\end{equation}
where $C$ is a constant independent of $h$, and $D(K)$ denotes the ``diamond'' of $K$, i.e. the union of the 
polygons in $\mathcal{T}_h$ intersecting $K$.
\end{proposizione}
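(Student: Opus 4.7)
The plan is to follow the now-classical VEM interpolation strategy, combining polynomial approximation via Scott--Dupont with a local stability estimate that exploits the Stokes-like definition of $\mathbf{V}_h^K$.

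First, I would define $\mathbf{u}_I \in \mathbf{V}_h$ as the unique element matching the degrees of freedom of $\mathbf{u}$, that is $\mathbf{D_V}(\mathbf{u}_I) = \mathbf{D_V}(\mathbf{u})$. Since the vertex and edge DoFs are shared across element interfaces, $\mathbf{u}_I$ is globally in $[H^1_0(\Omega)]^2$. (For $s=0$, where pointwise values are not well-defined, one precomposes with a Cl\'ement/Scott--Zhang type averaging; this is a standard technical adjustment that does not affect the scaling.) Then I fix $K$ and invoke Lemma~\ref{lm:scott} to obtain a polynomial $\mathbf{u}_\pi \in [\Pk_k(K)]^2$ satisfying the optimal local estimate. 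Since $[\Pk_k(K)]^2 \subseteq \mathbf{V}_h^K$, the discrete remainder $\mathbf{w}_h := \mathbf{u}_I - \mathbf{u}_\pi$ lies in $\mathbf{V}_h^K$ and has DoFs equal to $\mathbf{D_V}(\mathbf{u} - \mathbf{u}_\pi)$.

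By the triangle inequality it suffices to control $\|\mathbf{w}_h\|_{1,K}$, and here I would exploit the very definition of $\mathbf{V}_h^K$. The function $\mathbf{w}_h$ solves a Stokes-like problem on $K$ with boundary datum $\mathbf{w}_h|_{\partial K} \in [\B_k(\partial K)]^2$, divergence $\mathrm{div}\,\mathbf{w}_h \in \Pk_{k-1}(K)$, and residual $-\nu\boldsymbol{\Delta}\mathbf{w}_h - \nabla s \in \mathcal{G}_{k-2}(K)^\perp$. Stokes stability on the polygon $K$, uniform under assumption $\mathbf{(A1)}$, yields
\[
\|\mathbf{w}_h\|_{1,K} \le C\bigl( \|\mathbf{w}_h\|_{1/2,\partial K} + \|\mathrm{div}\,\mathbf{w}_h\|_{0,K} + \|-\nu\boldsymbol{\Delta}\mathbf{w}_h-\nabla s\|_{-1,K} \bigr).
\]
I would then estimate each of the three data terms from the matched DoFs:
the boundary term is controlled by the edgewise polynomial interpolation error of $\mathbf{u}-\mathbf{u}_\pi$ via $\mathbf{D_V1}$, $\mathbf{D_V2}$ together with a scaled trace inequality;
since $\mathrm{div}\,\mathbf{w}_h \in \Pk_{k-1}(K)$ and its moments against $\Pk_{k-1}(K)/\R$ are those of $\mathrm{div}(\mathbf{u}-\mathbf{u}_\pi)$ by $\mathbf{D_V4}$ (the constant mode being fixed by the divergence theorem combined with the boundary trace), it is an $L^2$-quasi-optimal approximation of $\mathrm{div}(\mathbf{u}-\mathbf{u}_\pi)$ and is bounded by $|\mathbf{u}-\mathbf{u}_\pi|_{1,K}$;
finally, the $\mathcal{G}_{k-2}(K)^\perp$ residual is estimated by testing against polynomials and using $\mathbf{D_V3}$ plus an inverse inequality on polynomials (scaled by $h_K$) to convert $L^2$ moments into an $H^{-1}$ bound, yielding $C\,h_K^{-1}\|\mathbf{u}-\mathbf{u}_\pi\|_{0,K}$.

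Combining these ingredients produces $\|\mathbf{w}_h\|_{1,K} \le C h_K^{s}|\mathbf{u}|_{s+1,D(K)}$, and together with the Scott--Dupont estimate for $\mathbf{u}-\mathbf{u}_\pi$ this gives the $H^1$ bound in~\eqref{eq:interpolation}; the $L^2$ bound follows either by an analogous argument or, more directly, by a Poincar\'e/Friedrichs-type inequality together with the same decomposition. The main obstacle is the Stokes stability step: obtaining it on a general polygon with constants depending only on $\gamma$ and $c$ in $\mathbf{(A1)}$--$\mathbf{(A2)}$ requires a careful scaling argument (reducing to a family of elements of unit diameter and using the star-shapedness to get a uniform inf-sup for the divergence). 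Once this uniform Stokes stability is in hand, the remaining bounds on the data are routine polynomial approximation and inverse inequality estimates.
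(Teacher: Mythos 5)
Your overall strategy --- define $\mathbf{u}_I$ by matching degrees of freedom, subtract the Scott--Dupont polynomial, and control the discrete remainder $\mathbf{w}_h=\mathbf{u}_I-\mathbf{u}_\pi\in\mathbf{V}_h^K$ through a Stokes stability estimate on $K$ --- is the natural first attempt, but it has a genuine gap at the third data term. The stability bound requires $\|\mathbf{h}\|_{-1,K}$, where $\mathbf{h}:=-\nu\boldsymbol{\Delta}\mathbf{w}_h-\nabla s\in\mathcal{G}_{k-2}(K)^\perp$ is the force datum of $\mathbf{w}_h$. The degrees of freedom $\mathbf{D_V3}$ give you the moments $\int_K\mathbf{w}_h\cdot\mathbf{g}_{k-2}^{\perp}\,{\rm d}K$ of the \emph{function} $\mathbf{w}_h$, not of its Stokes residual $\mathbf{h}$; the two are related only through the inverse of the (element-shape dependent) data-to-solution map that defines $\mathbf{V}_h^K$. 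There is no ``inverse inequality on polynomials'' converting the former into a bound on $\|\mathbf{h}\|_{0,K}$ or $\|\mathbf{h}\|_{-1,K}$: any attempt to extract $\mathbf{h}$ by testing the momentum equation reintroduces $|\mathbf{w}_h|_{1,K}$ and the pressure $s$ on the right-hand side, and the resulting absorption argument fails because the constants involve the inf-sup constant of $K$ and are not small. What you would actually need is a uniform (over all elements satisfying $\mathbf{(A1)}$--$\mathbf{(A2)}$) inverse stability bound for the virtual space relating $\mathbf{h}$ to the DoFs of $\mathbf{w}_h$; this is precisely the hard, shape-dependent statement, and your outline asserts it rather than proving it.

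The paper avoids this obstacle by \emph{not} taking the DoF interpolant. It constructs $\mathbf{u}_I$ element-wise as the solution of a local Stokes problem whose three data are prescribed explicitly: boundary value equal to the Cl\'ement interpolant $\mathbf{u}_c$, divergence equal to $\Pi_{k-1}^{0,K}({\rm div}\,\mathbf{u}_c)$, and force equal to $-\mathbf{g}_\pi^{\perp}$, the $\mathcal{G}_{k-2}(K)^\perp$-component of $\nu\boldsymbol{\Delta}\mathbf{u}_\pi$. Since all data are known in closed form, the estimate reduces to comparing $\mathbf{u}_I$ with $\mathbf{u}_\pi$ through an auxiliary Stokes problem with \emph{zero} force residual, where only the uniform lower bound on the inf-sup constant $\beta(K)$ (guaranteed by $\mathbf{(A1)}$) is needed --- the $H^{-1}$ estimate of an implicitly determined residual never appears. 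Incidentally, this also handles the case $s=0$ without point values. To salvage your route you would have to prove the uniform inverse stability of the data-to-DoF map; otherwise, switch to prescribing the Stokes data directly as the paper does.
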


\begin{proof}
The proof follows the guidelines of Proposition~4.2 in \cite{mora2015virtual}. For each polygon $K \in \mathcal{T}_h$, let us consider 
the triangulation $\mathcal{T}_h^K$ of $K$ obtained by joining each vertex of $K$ with the center of the ball with respect to which $K$ is 
star-shaped. Set now $\widehat{\mathcal{T}}_h := \bigcup_{K \in \mathcal{T}_h} \mathcal{T}_h^K$, which  is a 
triangular decomposition of the domain $\Omega$.

Let $\mathbf{u}_c$ be the Cl{\'e}ment interpolant of order $k$ of the function $\mathbf{u}$, relative to the triangular  decomposition  
$\widehat{\mathcal{T}}_h$ (see \cite{clement}). Then $\mathbf{u}_c \in [H^1(\Omega)]^2$ and it holds
\begin{equation}\label{eq:clemest}
\|\mathbf{u} - \mathbf{u}_c\|_{0,K} + h_K | \mathbf{u} - \mathbf{u}_c |_{1,K} \leq C h_K^{s+1}|\mathbf{u}|_{s+1,D(K)}.
\end{equation}
Let, for each  polygon $K$, $\mathbf{u}_{\pi}$ be the polynomial approximation of $\mathbf{u}$ as in Lemma \ref{lm:scott}. Then we have:
\begin{equation}\label{eq:upidec}
\nu \, \Delta  \mathbf{u}_{\pi} = \nabla p_{\pi} + \mathbf{g}_{\pi}^{\perp},
\end{equation}
for suitable $p_{\pi} \in \Pk_{k-1}(K)$ and $ \mathbf{g}_{\pi}^{\perp} \in \mathcal{G}_{k-2}(K)^{\perp}$. 
Let $p_c := \Pi_{k-1}^{0,K}({\rm div}\, \mathbf{u}_c)$ for all $K \in \mathcal{T}_h$. We introduce the following local Stokes problem
\begin{equation}
\label{eq:stokesinterpolant}
\left\{
\begin{aligned}
& - \nu \, \boldsymbol{\Delta}   \mathbf{u}_I -  \nabla s = -\mathbf{g}_{\pi}^{\perp} \qquad  & &\text{in $K$,} \\
& {\rm div} \, \mathbf{u}_I = p_c \qquad & &\text{in $K$,} \\
& \mathbf{u}_I = \mathbf{u}_c  \qquad & &\text{on $\partial K$.}
\end{aligned}
\right.
\end{equation}
It is straightforward to check that $\mathbf{u}_I \in \mathbf{V}_h^K$. Furthermore, since $\mathbf{u}_I = \mathbf{u}_c$ on each boundary 
$\partial K $, $\mathbf{u}_I \in [H^1(\Omega)]^2$. We infer that $\mathbf{u}_I \in \mathbf{V}_h$. We now prove that $\mathbf{u}_I$
satisfies estimate~\eqref{eq:interpolation}.
We consider the following auxiliary local Stokes problem
\begin{equation}\label{eq:interpaux}
\left\{
\begin{aligned}
& - \nu \, \boldsymbol{\Delta}\,  \widetilde{\mathbf{u}} -  \nabla \widetilde{s} = -\mathbf{g}_{\pi}^{\perp} \qquad  & &\text{in $K$,} \\
& {\rm div} \, \widetilde{\mathbf{u}} = {\rm div}\, \mathbf{u}_c \qquad & &\text{in $K$,} \\
& \widetilde{\mathbf{u}} = \mathbf{u}_c  \qquad & &\text{on $\partial K$.}
\end{aligned}
\right.
\end{equation}
By \eqref{eq:interpaux} and~\eqref{eq:upidec}, we get
\begin{equation}
\left\{
\begin{aligned}
& - \nu \, \boldsymbol{\Delta}   (\mathbf{u}_{\pi} -  \widetilde{\mathbf{u}})   -  \nabla (- p_{\pi} - \widetilde{s}) = \mathbf{0} \qquad  & &\text{in $K$,} \\
& {\rm div} \, \left( \mathbf{u}_{\pi} - \widetilde{\mathbf{u}} \right) = {\rm div}\, \left( \mathbf{u}_{\pi} - \mathbf{u}_c \right) \qquad & &\text{in $K$,} \\
& \mathbf{u}_{\pi} - \widetilde{\mathbf{u}} = \mathbf{u}_{\pi} - \mathbf{u}_c  \qquad & &\text{on $\partial K$.}
\end{aligned}
\right.
\end{equation}
Therefore we get
\[
\begin{split}
|\mathbf{u}_{\pi} -  \widetilde{\mathbf{u}}|_{1, K} &= 
\inf \{ |\mathbf{z}|_{1, K} \, \text{: $\mathbf{z} \in [H^1(K)]^2$,  ${\rm div} \, \mathbf{z} ={\rm div}\, \left( \mathbf{u}_{\pi} - 
\mathbf{u}_c \right)$ and  $\mathbf{z} = \mathbf{u}_{\pi} - \mathbf{u}_c$ on $\partial K$} \} .
\end{split}
\]
Choosing $\mathbf{z} = \mathbf{u}_{\pi} -  \mathbf{u}_c $, by Lemma \ref{lm:scott} and estimates \eqref{eq:scott} and \eqref{eq:clemest}, we obtain
\begin{equation}\label{eq:intest0}
|\mathbf{u}_{\pi} -  \widetilde{\mathbf{u}}|_{1, K}  \leq |\mathbf{u}_{\pi} -  \mathbf{u}_c |_{1, K} \leq 
\ |\mathbf{u}_{\pi} -  \mathbf{u} |_{1, K}  + |\mathbf{u} -  \mathbf{u}_c|_{1, K}
\leq C\, h_K^s |\mathbf{u}|_{s+1,D(K)}.
\end{equation} 
Subtracting \eqref{eq:stokesinterpolant} from \eqref{eq:interpaux}, we have
\[
\left\{
\begin{aligned}
& - \nu \, \boldsymbol{\Delta}   (\widetilde{\mathbf{u}} - \mathbf{u}_I)  -  \nabla (\widetilde{s} - s) = \mathbf{0} \qquad  & &\text{in $K$,} \\
& {\rm div} \, \left(\widetilde{\mathbf{u}} - \mathbf{u}_I \right) = {\rm div}\,  \mathbf{u}_c - p_c \qquad & &\text{in $K$,} \\
&  \widetilde{\mathbf{u}}  - \mathbf{u}_I= \mathbf{0} \qquad & &\text{on $\partial K$.}
\end{aligned}
\right.
\]
Using the standard theory of saddle point problems (see for instance \cite{BoffiBrezziFortin}), we get
\[
|\widetilde{\mathbf{u}} - \mathbf{u}_I|_{1, K}  \leq \frac{1}{\beta(K)} \left( 1 + \frac{\|a^K\|}{\alpha^K}  \right) \|{\rm div}\,  \mathbf{u}_c - p_c\|_{0,K}
\]
where $\beta(K)$ is the inf-sup constant on the polygon $K$ (cf \eqref{eq:inf-sup}) and $\|a^K\|$ and $\alpha^K$ denote respectively the norm and the coercivity constant of $a^K(\cdot, \cdot)$. It is straightforward to check that
\[
\|a^K\| = \nu \qquad \text{and} \qquad \alpha^K \geq \frac{\nu}{1 + h_K^2}.
\]
Therefore, recalling that $p_c := \Pi_{k-1}^{0,K}({\rm div}\, \mathbf{u}_c)$, using first the triangle inequality, then estimate \eqref{eq:clemest} 
and standard estimates, we have
\[
\begin{split}
|\widetilde{\mathbf{u}} - \mathbf{u}_I|_{1, K}  &\leq  \frac{2 + h_K^2}{\beta(K)} 
\left( \left \| (I - \Pi_{k-1}^{0,K}) \, ({\rm div}  \,\mathbf{u} - {\rm div} \,\mathbf{u}_c) \right \|_{0,K} +  
\left \| (I - \Pi_{k-1}^{0,K}) \, {\rm div}  \,\mathbf{u}\right \|_{0,K} \right) \\
& \leq \frac{C}{\beta(K)} \, \left(  \| {\rm div} (\mathbf{u} - 
\mathbf{u}_c) \|_{0,K} +  h_K^s|{\rm div} \, \mathbf{u}|_{s,K} \right) \\
& \leq \frac{C}{\beta(K)} \, \left( |\mathbf{u} - \mathbf{u}_c|_{1,K} + h_K^s | \mathbf{u} |_{s+1,K} \right) 
\leq  \frac{C}{\beta(K)} \, h_K^s | \mathbf{u} |_{s+1,D(K)}.
\end{split}
\]
By assumption $\mathbf{(A1)}$ and using the results in \cite{constant,duran}, the inf-sup constant $\beta(K)$ is uniformly 
bounded from below: there exists $c>0$, independent of $h$, such that  $\beta(K) \geq c$ for all $K \in \mathcal{T}_h$. 
Therefore, it holds
\begin{equation}\label{eq:intest1}
|\widetilde{\mathbf{u}} - \mathbf{u}_I|_{1, K}  \leq C \, h_K^s | \mathbf{u}|_{s+1,D(K)}.
\end{equation}
The triangle inequality together with estimates \eqref{eq:scott}, \eqref{eq:intest0} and \eqref{eq:intest1}, give
\begin{equation}\label{eq:locest1}
|\mathbf{u} - \mathbf{u}_I|_{1, K} \le  |\mathbf{u} - \mathbf{u}_{\pi}|_{1,K} + 
|\mathbf{u}_{\pi} - \widetilde{\mathbf{u}}|_{1,K}  + |\widetilde{\mathbf{u}} - \mathbf{u}_I|_{1,K}  \leq  C h_K^s | \mathbf{u} |_{s+1,D(K)}
\end{equation}
Furthermore, for each polygon $K \in \mathcal{T}_h$, we have that $\mathbf{u}_I - \mathbf{u}_c = \mathbf{0}$ on $\partial K$,
see \eqref{eq:stokesinterpolant}. Hence, it holds
\[
\|\mathbf{u}_I - \mathbf{u}_c\|_{0,K} \leq C \,h_K |\mathbf{u}_I - \mathbf{u}_c|_{1,K}.
\]
Therefore, we get
\begin{equation}\label{eq:locest2}
\begin{split}
\|\mathbf{u} - \mathbf{u}_I\|_{0,K} & \leq \|\mathbf{u} - \mathbf{u}_c\|_{0,K} + \|\mathbf{u}_c - \mathbf{u}_I\|_{0,K} 
\leq C  \left(h_K^{s+1} |\mathbf{u}|_{s+1,D(K)}  + h_K |\mathbf{u}_I - \mathbf{u}_c |_{1,K} \right) \\
&  \leq \left( h_K^{s+1} |\mathbf{u}|_{s+1,D(K)}  + h_K  |\mathbf{u} - \mathbf{u}_I |_{1,K}  + h_K 
|\mathbf{u} - \mathbf{u}_c|_{1,K} \right)  \leq C \, h^{s+1}_K |\mathbf{u} |_{s+1,D(K)}.
\end{split}
\end{equation}
From \eqref{eq:locest1} and \eqref{eq:locest2}, we infer estimate \eqref{eq:interpolation}. 
\end{proof}


\subsection{A stability result: the inf-sup condition}
\label{sub:4.1}

Aim of this section is to prove that the following inf-sup condition holds.
\begin{proposizione}
\label{thm2} Given the discrete spaces
$\mathbf{V}_h$ and $Q_h$ defined in~\eqref{eq:V_h} and~\eqref{eq:Q_h}, there exists a positive $\tilde{\beta}$, independent of $h$, such that:
\begin{equation}
\label{eq:inf-sup discreta}
\sup_{\mathbf{v}_h \in \mathbf{V}_h \, \mathbf{v}_h \neq \mathbf{0}} \frac{b(\mathbf{v}_h, q_h)}{ \|\mathbf{v}_h\|_{1}} \geq \tilde{\beta} \|q_h\|_Q \qquad \text{for all $q_h \in Q_h$.}
\end{equation}
\end{proposizione}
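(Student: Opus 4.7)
The plan is to invoke Fortin's criterion: combining the continuous inf-sup \eqref{eq:inf-sup} with a uniformly $H^1$-stable operator $\Pi_F : \mathbf{V} \to \mathbf{V}_h$ satisfying $b(\mathbf{v} - \Pi_F \mathbf{v}, q_h) = 0$ for every $q_h \in Q_h$ would yield \eqref{eq:inf-sup discreta} at once, with $\tilde\beta = \beta / C_F$ where $C_F$ is the operator norm of $\Pi_F$. The entire task thus reduces to constructing such a Fortin operator.

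The key structural observation is that the degrees of freedom $\mathbf{D_V4}$ encode exactly the moments of ${\rm div}$ of order from $1$ up to $k-1$, and that, by~\eqref{eq:divfree}, ${\rm div}(\mathbf{V}_h^K) \subseteq \Pk_{k-1}(K)$. Therefore the Fortin orthogonality $\int_K {\rm div}(\Pi_F \mathbf{v} - \mathbf{v})\, q \, {\rm d}K = 0$ for all $q \in \Pk_{k-1}(K)$ localizes to: (i) matching the zero-th divergence moment, which via the divergence theorem is equivalent to preserving $\int_{\partial K} \mathbf{v} \cdot \mathbf{n}\,{\rm d}s$ through the boundary DoFs, and (ii) matching the higher moments through $\mathbf{D_V4}$. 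To secure (i), I would use a Scott--Zhang-type boundary quasi-interpolation $\mathbf{v}_{SZ}$ on the auxiliary triangulation $\widehat{\mathcal{T}}_h$ built in the proof of Proposition~\ref{thm:interpolation}, arranged so as to preserve the edge moment $\int_e \mathbf{v} \cdot \mathbf{n}_e \, {\rm d}s$ on each edge $e \subset \partial K$. Then I would define $\Pi_F \mathbf{v}|_K$ as the unique element of $\mathbf{V}_h^K$ having boundary trace $\mathbf{v}_{SZ}|_{\partial K}$, $\mathbf{D_V3}(\Pi_F \mathbf{v})=\mathbf{0}$, and $\mathbf{D_V4}(\Pi_F \mathbf{v})$ equal to the corresponding moments of ${\rm div}\,\mathbf{v}$; equivalently, $\Pi_F \mathbf{v}|_K$ solves the local Stokes-type problem~\eqref{datasol} with these data, the compatibility condition~\eqref{compcond} being ensured by construction.

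For the stability bound $\|\Pi_F \mathbf{v}\|_1 \leq C \|\mathbf{v}\|_1$ I would mimic the argument of Proposition~\ref{thm:interpolation}: the $H^1$-norm of $\Pi_F \mathbf{v}|_K$ is controlled via the local Stokes theory by the $H^{1/2}(\partial K)$-norm of the boundary data (bounded by $\|\mathbf{v}\|_{1, D(K)}$ through standard Scott--Zhang estimates) and the $L^2(K)$-norm of the prescribed divergence (bounded by $\|\mathbf{v}\|_{1,K}$), with a constant depending on the local inf-sup $\beta(K)$; summing over $K \in \mathcal{T}_h$ then yields the desired global bound. The crux, and the main obstacle, is that this constant must be independent of $h$ and of the element shape: this rests on the uniform-in-$h$ lower bound $\beta(K) \geq c$ on star-shaped polygons fulfilling $\mathbf{(A1)}$, supplied by \cite{constant, duran}, which is the very same ingredient already invoked in Proposition~\ref{thm:interpolation}.
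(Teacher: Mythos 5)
Your proposal is correct and follows essentially the same route as the paper: a Fortin operator built from the continuous inf-sup condition, with the constant pressure modes handled through boundary data preserving the edge fluxes $\int_e \mathbf{v}\cdot\mathbf{n}\,{\rm d}s$ (the paper's $\bar{\mathbf{v}}_h$, borrowed from \cite{VEM-elasticity}) and the higher divergence moments matched through the internal DoFs $\mathbf{D_V4}$ (the paper's bubble correction $\widetilde{\mathbf{v}}_h$), the only difference being that you merge the two steps into a single local Stokes problem rather than writing $\pi_h\mathbf{v}=\bar{\mathbf{v}}_h+\widetilde{\mathbf{v}}_h$. The uniform stability rests, in both arguments, on the $h$-independent local inf-sup bound $\beta(K)\ge c$ from \cite{constant,duran}; your sketch is at the same level of detail as the paper's, which also only outlines the stability of the bubble part.
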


\begin{proof}
We only sketch the proof, because it essentially follows the guidelines of Theorem~3.1 in \cite{VEM-elasticity}. Since the continuous inf-sup condition \eqref{eq:inf-sup} is fulfilled,  it is sufficient to construct a linear operator $\pi_{h} \colon \mathbf{V} \to \mathbf{V}_h$, satisfying (see~\cite{BoffiBrezziFortin}):
\begin{equation}
\label{eq:fortin}
\left\{
\begin{aligned}
&b(\pi_h \mathbf{v}, q_h) = b(\mathbf{v}, q_h)  & &\forall\,\mathbf{v} \in \mathbf{V}, \forall\,q_h \in Q_h, \\
&  \|\pi_h \mathbf{v}\|_{1} \leq c_{\pi}\|\mathbf{v}\|_{1} & &\forall\,\mathbf{v} \in \mathbf{V},
\end{aligned}
\right.
\end{equation} 
where $c_{\pi}$ is a positive $h$-independent constant. 
Given $\mathbf{v} \in \mathbf{V}$, using arguments borrowed from \cite{VEM-elasticity} and considering the VEM interpolant $\mathbf{v}_I$ 
presented in Proposition~\eqref{thm:interpolation}, we first construct $\bar{\mathbf{v}}_h \in \mathbf{V}_h$ such that
\[
 b(\mathbf{v} - \bar{\mathbf{v}}_h, \bar{q}_h) = 0 \qquad \text{$\forall\,\bar{q}_h$ piecewise constant function in $\mathcal{T}_h$}
\]
and
 
\begin{equation}
\label{eq: stime bar v}
\|\mathbf{v} - \bar{\mathbf{v}}_h \|_{1} \leq C  \| {\mathbf{v}} \|_{1} \qquad \forall\, \mathbf{v}\in \mathbf{V}. 
\end{equation}

Next, we build a  ``bubble'' function $\widetilde{\mathbf{v}}_h \in \mathbf{V}_h$, locally defined as follows. Given $K\in\mathcal{T}_h $, we set all the degrees of freedom $\mathbf{D_V1}$, $\mathbf{D_V2}$ and $\mathbf{D_V3}$ equal to zero, while we set the degrees of freedom $\mathbf{D_V4}$ imposing
\begin{equation}
\label{eq:tilde v}
b^K( \widetilde{\mathbf{v}}_h , q_k)= b^K (\mathbf{v} - \bar{\mathbf{v}}_h, q_k) \qquad \forall\, q_k \in \Pk_{k-1}(K).
\end{equation}
It holds:
\begin{equation}
\label{eq: stime tilde v}
\|\widetilde{\mathbf{v}}_h \|_{1} \leq C  \|\mathbf{v} - \bar{\mathbf{v}}_h \|_{1} \leq C  \|{\mathbf{v}} \|_{1}  .
\end{equation}
Now we set
\[
\pi_h \mathbf{v} := \bar{\mathbf{v}}_h + \widetilde{\mathbf{v}}_h \qquad \text{for all $\mathbf{v} \in \mathbf{V}$.}
\]
By \eqref{eq:tilde v}, we have
\[
b(\mathbf{v} - \pi_h \mathbf{v}, q_h) = b(\mathbf{v} - \bar{\mathbf{v}}_h, q_h) - b(\widetilde{\mathbf{v}}_h, q_h) = 0 \qquad \text{for all $q_h\in Q_h$,}
\]
and combining \eqref{eq: stime bar v} and \eqref{eq: stime tilde v}, we get
\[
\|\pi_h \mathbf{v}\|_{1} = \|\bar{\mathbf{v}}_h + \widetilde{\mathbf{v}}_h\|_{1} \leq \|\bar{\mathbf{v}}_h - \mathbf{v}\|_{1} + \|\mathbf{v}\|_{1}+ \|\widetilde{\mathbf{v}}_h\|_{1} \leq  C\|\mathbf{v}\|_{1}.
\]
\end{proof}
An immediate consequence of the previous result is the following Theorem.

\begin{teorema}
Problem \eqref{eq:stokes virtual} has a unique solution $(\mathbf{u}_h, p_h) \in \mathbf{V}_h \times Q_h$, verifying the estimate
\[
\|\mathbf{u}_h\|_{1} + \|p_h\|_{Q} \leq C \|\mathbf{f}\|_{0}.
\]
\end{teorema}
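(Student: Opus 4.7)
The plan is to apply the classical Brezzi theory for saddle-point problems to the discrete system~\eqref{eq:stokes virtual}, since all the necessary ingredients are already established in the preceding sections. Specifically, well-posedness of~\eqref{eq:stokes virtual} follows from: (i) continuity of $a_h(\cdot,\cdot)$ and $b(\cdot,\cdot)$ on $\mathbf{V}_h\times\mathbf{V}_h$ and $\mathbf{V}_h\times Q_h$ respectively, (ii) coercivity of $a_h(\cdot,\cdot)$ on the discrete kernel $\mathbf{Z}_h$, and (iii) the discrete inf-sup condition \eqref{eq:inf-sup discreta}.

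First, I would verify coercivity. Summing the local stability estimate~\eqref{eq:stabk} over $K \in \mathcal{T}_h$ and using the coercivity of the continuous form $a(\cdot,\cdot)$, we obtain
\begin{equation*}
a_h(\mathbf{v}_h, \mathbf{v}_h) \;\geq\; \alpha_* \, a(\mathbf{v}_h, \mathbf{v}_h) \;\geq\; \alpha_* \, \alpha \, \|\mathbf{v}_h\|_1^2 \qquad \forall\, \mathbf{v}_h\in\mathbf{V}_h,
\end{equation*}
so that $a_h$ is coercive on the whole of $\mathbf{V}_h$ (hence in particular on $\mathbf{Z}_h$). Continuity of $a_h$ follows analogously from the upper bound in~\eqref{eq:stabk} together with the continuity of $a$, while continuity of $b(\cdot,\cdot)$ is inherited from the continuous setting since no approximation is introduced in \eqref{bhform}. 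The inf-sup condition for $b$ on $\mathbf{V}_h \times Q_h$ has just been established in Proposition~\ref{thm2}.

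By the standard theory of saddle-point problems (see e.g.~\cite{BoffiBrezziFortin}), these three conditions guarantee existence and uniqueness of $(\mathbf{u}_h, p_h) \in \mathbf{V}_h \times Q_h$ solving~\eqref{eq:stokes virtual}, together with the a priori bound
\begin{equation*}
\|\mathbf{u}_h\|_1 + \|p_h\|_Q \;\leq\; C \, \|F_h\|_{\mathbf{V}_h^*},
\end{equation*}
where $F_h(\mathbf{v}_h) := (\mathbf{f}_h, \mathbf{v}_h)$ and $C$ depends only on the constants appearing in (i)--(iii). To control $\|F_h\|_{\mathbf{V}_h^*}$, I would use Cauchy--Schwarz together with the $L^2$-stability of the projector $\Pi_{k-2}^{0,K}$: for any $\mathbf{v}_h \in \mathbf{V}_h$,
\begin{equation*}
|(\mathbf{f}_h, \mathbf{v}_h)| \;=\; \Big|\sum_{K\in\mathcal{T}_h} \int_K \Pi_{k-2}^{0,K}\mathbf{f} \cdot \mathbf{v}_h \,{\rm d}K\Big| \;\leq\; \|\mathbf{f}\|_0 \, \|\mathbf{v}_h\|_0 \;\leq\; C\|\mathbf{f}\|_0 \|\mathbf{v}_h\|_1,
\end{equation*}
which yields the desired estimate.

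There is essentially no hard step here: the main content has been front-loaded into the earlier sections (inf-sup in Proposition~\ref{thm2}, stability of $a_h$ in~\eqref{eq:stabk}, computability and boundedness of the load term in Section~\ref{sub:3.3}). The only small cautionary point is making sure the right-hand side is interpreted as an element of $\mathbf{V}_h^*$ with an $h$-independent bound; using the $L^2$-contractivity of the projection avoids any need to invoke higher regularity of $\mathbf{f}$, which is exactly what the stated estimate requires.
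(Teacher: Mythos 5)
Your proposal is correct and follows essentially the same route as the paper: the paper derives this theorem as an ``immediate consequence'' of the discrete inf-sup condition of Proposition~\ref{thm2} combined with the uniform stability of $a_h(\cdot,\cdot)$ noted in Section~\ref{sec:discrete}, which is exactly the Brezzi-theory argument you spell out. Your additional detail on bounding the load functional via the $L^2$-stability of $\Pi_{k-2}^{0,K}$ is a correct filling-in of a step the paper leaves implicit.
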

Moreover, the inf-sup condition of Proposition~\ref{thm2}, along with property~\eqref{eq:divfree}, implies that:
\begin{equation}\label{eq:divfree2}
{\rm div}\, \mathbf{V}_h = Q_h .
\end{equation}

\begin{osservazione}
\label{oss:inf-supclassic}
An analogous result of Proposition \ref{thm2} is shown in \cite{VEM-elasticity}, where the discrete inf-sup condition is detailed for the virtual local spaces defined in Remark \ref{oss:3.1}.  Therefore, as already observed, also the spaces of \cite{VEM-elasticity} could be directly used as a stable pair for the Stokes problem. On the other hand, the choice in \cite{VEM-elasticity} would not satisfy condition
\eqref{kernincl} and thus the discrete solution would not be divergence free. Moreover, such spaces would not share the interesting property 
to be equivalent to a suitable reduced problem (cf. Section \ref{sec:5}).
\end{osservazione}

%

\subsection{A convergence result}
\label{sub:4.2}

We begin by remarking that, using Proposition \ref{thm:interpolation} and classical approximation theory, for $\mathbf{v} \in [H^{k+1}(\Omega)]^2$ and $q \in H^k(\Omega)$ it holds
 \begin{equation}
 \label{eq: stime v_I}
 \inf_{ \mathbf{v}_h\in \mathbf{V}_h }\|\mathbf{v} - \mathbf{v}_h\|_{1} \leq C h^k |\mathbf{v}|_{k+1}
 \end{equation}
 and 
 \begin{equation}
 \label{eq: stime q_I}
 \inf_{ \mathbf{q}_h\in \mathbf{Q}_h }\|q - q_h \|_{Q} \leq C h^k |q|_k.
 \end{equation}

We now notice that, if $\mathbf{u}\in\mathbf{V}$ is the velocity solution to Problem \eqref{eq:stokes variazionale}, then it is the solution to Problem (cf. also~\eqref{eq:Z}):
\begin{equation}
\label{eq:stokes nucleo}
\left\{
\begin{aligned}
& \text{find $\mathbf{u} \in \mathbf{Z}$ } \\
& a(\mathbf{u}, \mathbf{v}) = (\mathbf{f}, \mathbf{v}) \qquad & \text{for all $\mathbf{v} \in \mathbf{Z}$.} 
\end{aligned}
\right.
\end{equation}
Analogously, if $\mathbf{u}_h\in\mathbf{V}_h$ is the velocity solution to Problem \eqref{eq:stokes virtual}, then it is the solution to Problem (cf. also~\eqref{eq:Z_h}):
\begin{equation}
\label{eq:stokes nucleo discreto}
\left\{
\begin{aligned}
& \text{find $\mathbf{u}_h \in \mathbf{Z}_h$ } \\
& a_h(\mathbf{u}_h, \mathbf{v}_h) = (\mathbf{f}_h, \mathbf{v}_h) \qquad & \text{for all $\mathbf{v}_h \in \mathbf{Z}_h$,} 
\end{aligned}
\right.
\end{equation}
Recalling~\eqref{kernincl}, Problem \eqref{eq:stokes nucleo discreto} can be seen as a standard virtual approximation of the elliptic problem \eqref{eq:stokes nucleo}. 
Furthermore, given $\mathbf{z}\in \mathbf{Z}$, the inf-sup condition~\eqref{eq:inf-sup discreta} implies (see~\cite{BoffiBrezziFortin}):
\begin{equation*}
\inf_{\mathbf{z}_h\in \mathbf{Z}_h} ||\mathbf{z} -\mathbf{z}_h ||_{1}\le C\inf_{\mathbf{v}_h\in \mathbf{V}_h} ||\mathbf{z} -\mathbf{v}_h ||_{1},
\end{equation*}  
which essentially means that $\mathbf{Z}$ is approximated by $\mathbf{Z}_h$ with the same accuracy order of the whole subspace $ \mathbf{V}_h$.
As a consequence, usual VEM arguments (for instance, as in \cite{volley}) and~\eqref{eq: stime v_I} lead to the following result.

\begin{teorema}
\label{thm5}
Let $\mathbf{u} \in \mathbf{Z}$ be the solution of problem \eqref{eq:stokes nucleo} and $\mathbf{u}_h \in \mathbf{Z}_h$ be the solution of problem \eqref{eq:stokes nucleo discreto}. Then
\[
\|\mathbf{u} - \mathbf{u}_h\|_{1} \leq Ch^k \left( |\mathbf{f}|_{k-1} + |\mathbf{u}|_{k+1}\right).
\]  
\end{teorema}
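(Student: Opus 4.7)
The plan is to apply the standard VEM energy argument for elliptic problems to the reduced formulation \eqref{eq:stokes nucleo discreto}, exploiting crucially that $\mathbf{Z}_h \subseteq \mathbf{Z}$ so that the discretisation is \emph{conforming} on the kernel. First I would pick $\mathbf{u}_I \in \mathbf{Z}_h$ with $\|\mathbf{u} - \mathbf{u}_I\|_1 \le Ch^k |\mathbf{u}|_{k+1}$: its existence follows from Proposition \ref{thm:interpolation} combined with the Fortin-type bound on $\mathbf{Z}_h$ (a consequence of the discrete inf-sup~\eqref{eq:inf-sup discreta}) recalled just before the theorem statement. I then set $\boldsymbol{\delta}_h := \mathbf{u}_h - \mathbf{u}_I \in \mathbf{Z}_h$ and, using the coercivity inherited from the stability~\eqref{eq:stabk}, obtain
\[
\alpha_* \alpha \|\boldsymbol{\delta}_h\|_1^2 \le a_h(\boldsymbol{\delta}_h, \boldsymbol{\delta}_h) = (\mathbf{f}_h, \boldsymbol{\delta}_h) - a_h(\mathbf{u}_I, \boldsymbol{\delta}_h),
\]
where the last equality uses \eqref{eq:stokes nucleo discreto}.

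Next, on each $K$ I would insert a polynomial approximant $\mathbf{u}_\pi \in [\Pk_k(K)]^2$ supplied by Lemma \ref{lm:scott} and split
\[
a_h^K(\mathbf{u}_I, \boldsymbol{\delta}_h) = a_h^K(\mathbf{u}_I - \mathbf{u}_\pi, \boldsymbol{\delta}_h) + a^K(\mathbf{u}_\pi, \boldsymbol{\delta}_h),
\]
using the $k$-consistency \eqref{eq:consist}. Summing over $K$ and exploiting that $\boldsymbol{\delta}_h \in \mathbf{Z}$ so that $a(\mathbf{u}, \boldsymbol{\delta}_h) = (\mathbf{f}, \boldsymbol{\delta}_h)$ by \eqref{eq:stokes nucleo}, I arrive at
\[
\alpha_* \alpha \|\boldsymbol{\delta}_h\|_1^2 \le |(\mathbf{f}_h - \mathbf{f}, \boldsymbol{\delta}_h)| + \sum_K |a_h^K(\mathbf{u}_I - \mathbf{u}_\pi, \boldsymbol{\delta}_h)| + \sum_K |a^K(\mathbf{u}_\pi - \mathbf{u}, \boldsymbol{\delta}_h)|.
\]
Each term is then bounded routinely: the load contribution by Lemma \ref{lemma2} (yielding $Ch^k |\mathbf{f}|_{k-1} \|\boldsymbol{\delta}_h\|_1$), and the two remaining ones by continuity of $a_h^K$ (a consequence of the stability \eqref{eq:stabk} and of the symmetry of $a_h^K$) and of $a^K$, together with the interpolation estimates from Lemma \ref{lm:scott} and Proposition \ref{thm:interpolation} applied to $\|\mathbf{u}_I - \mathbf{u}_\pi\|_{1,K} \le \|\mathbf{u} - \mathbf{u}_I\|_{1,K} + \|\mathbf{u} - \mathbf{u}_\pi\|_{1,K}$, giving a bound of order $Ch^k |\mathbf{u}|_{k+1} \|\boldsymbol{\delta}_h\|_1$. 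A final triangle inequality $\|\mathbf{u} - \mathbf{u}_h\|_1 \le \|\mathbf{u} - \mathbf{u}_I\|_1 + \|\boldsymbol{\delta}_h\|_1$ closes the estimate.

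I do not expect any serious obstacle: the divergence-free inclusion \eqref{kernincl} eliminates the pressure consistency terms that would otherwise arise in standard (non-exactly-divergence-free) mixed discretisations, and the argument collapses to the textbook VEM scheme for a coercive elliptic problem, \emph{with} an additional consistency perturbation coming from the load approximation $\mathbf{f}_h$. The only mildly delicate point is ensuring the interpolant belongs to $\mathbf{Z}_h$ rather than merely to $\mathbf{V}_h$, but this is already handled by the inf-sup-based Fortin reasoning recalled immediately before the statement.
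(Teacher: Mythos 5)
Your argument is correct and coincides with what the paper intends: the paper does not write out this proof, but appeals to exactly the ingredients you use -- the kernel inclusion \eqref{kernincl}, the fact that $\mathbf{Z}_h$ approximates $\mathbf{Z}$ with the same order as $\mathbf{V}_h$ (via the discrete inf-sup condition), the estimate \eqref{eq: stime v_I}, and the ``usual VEM arguments'' of \cite{volley}, which are precisely the energy/consistency splitting you carry out. Your write-up is a faithful expansion of that sketch (and you even cite Lemma~\ref{lemma2} for the load term, where the paper's proof of Theorem~\ref{thm4} contains a misprint pointing to Lemma~\ref{lemma1}), so no changes are needed.
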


We proceed by analysing the error on the pressure field. We are ready to prove the following error estimates for the pressure approximation.

\begin{teorema}
\label{thm4}
Let $(\mathbf{u}, p) \in \mathbf{V} \times Q$ be the solution of Problem \eqref{eq:stokes variazionale} and $(\mathbf{u}_h, p_h) \in \mathbf{V}_h \times Q_h$ be the solution of Problem \eqref{eq:stokes virtual}. Then it holds:
\begin{equation}\label{eq:pressure-estimate}
\|p -p_h\|_Q \leq C h^k \left( |\mathbf{f}|_{k-1} + |\mathbf{u}|_{k+1} + |p|_{k} \right).
\end{equation}
\end{teorema}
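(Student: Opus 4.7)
The plan is to bound $\|p-p_h\|_Q$ by splitting it via triangle inequality through an interpolant $p_I\in Q_h$, and to control the discrete error $\|p_I - p_h\|_Q$ using the discrete inf--sup condition of Proposition~\ref{thm2}. More precisely, I would first choose $p_I$ as the piecewise $L^2$-projection of $p$ onto $Q_h$ (enforcing the zero-average constraint), so that standard approximation estimates yield $\|p-p_I\|_Q \le C h^k |p|_k$. Then, by~\eqref{eq:inf-sup discreta},
\begin{equation*}
\tilde\beta \, \|p_I - p_h\|_Q \;\le\; \sup_{\mathbf{v}_h\in \mathbf{V}_h,\, \mathbf{v}_h\neq \mathbf{0}} \frac{b(\mathbf{v}_h,\, p_I - p_h)}{\|\mathbf{v}_h\|_1}.
\end{equation*}

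Next, I would rewrite $b(\mathbf{v}_h, p_I - p_h) = b(\mathbf{v}_h, p_I - p) + b(\mathbf{v}_h, p - p_h)$. The first piece is controlled by continuity of $b(\cdot,\cdot)$ and the estimate for $\|p-p_I\|_Q$. For the second piece, I would subtract the first equation of the continuous problem~\eqref{eq:stokes variazionale} from the first equation of the discrete problem~\eqref{eq:stokes virtual}, both tested against $\mathbf{v}_h\in \mathbf{V}_h$:
\begin{equation*}
b(\mathbf{v}_h, p - p_h) \;=\; \bigl(a_h(\mathbf{u}_h,\mathbf{v}_h) - a(\mathbf{u},\mathbf{v}_h)\bigr) \;+\; (\mathbf{f} - \mathbf{f}_h, \mathbf{v}_h).
\end{equation*}
The load-term piece is handled by Lemma~\ref{lemma2}, contributing $C h^k |\mathbf{f}|_{k-1}\|\mathbf{v}_h\|_1$. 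The bilinear-form consistency error is treated by inserting a local polynomial $\mathbf{u}_\pi$ (from Lemma~\ref{lm:scott}) and writing
\begin{equation*}
a_h^K(\mathbf{u}_h,\mathbf{v}_h) - a^K(\mathbf{u},\mathbf{v}_h) \;=\; a_h^K(\mathbf{u}_h - \mathbf{u}_\pi,\mathbf{v}_h) \;-\; a^K(\mathbf{u}-\mathbf{u}_\pi,\mathbf{v}_h),
\end{equation*}
where I have exploited the $k$-consistency property~\eqref{eq:consist} to eliminate $a_h^K(\mathbf{u}_\pi,\mathbf{v}_h) = a^K(\mathbf{u}_\pi,\mathbf{v}_h)$. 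Using continuity of $a^K$, the stability bound~\eqref{eq:stabk} for $a_h^K$, and the triangle inequality $\|\mathbf{u}_h - \mathbf{u}_\pi\|_{1,K} \le \|\mathbf{u}_h - \mathbf{u}\|_{1,K} + \|\mathbf{u}-\mathbf{u}_\pi\|_{1,K}$, each term is bounded by $C h_K^k |\mathbf{u}|_{k+1,D(K)}$ thanks to Theorem~\ref{thm5} (applied to the velocity error) and Lemma~\ref{lm:scott} (for $\mathbf{u}-\mathbf{u}_\pi$).

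Summing over $K\in\mathcal{T}_h$, applying Cauchy--Schwarz and dividing by $\|\mathbf{v}_h\|_1$, I obtain
\begin{equation*}
\|p_I - p_h\|_Q \;\le\; C\, h^k\bigl(|\mathbf{f}|_{k-1} + |\mathbf{u}|_{k+1} + |p|_k\bigr),
\end{equation*}
and the final estimate~\eqref{eq:pressure-estimate} follows combining with $\|p-p_I\|_Q\le C h^k|p|_k$. The only delicate step is the consistency-error manipulation for $a_h - a$: one has to be careful to use the polynomial consistency of $a_h^K$ on the $\mathbf{u}_\pi$ argument (not on $\mathbf{v}_h$, which is only a virtual function), and then to invoke Theorem~\ref{thm5} for the already-established velocity convergence. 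Everything else is routine.
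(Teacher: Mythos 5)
Your proposal is correct and follows essentially the same route as the paper's own proof: discrete inf--sup applied to $p_h$ minus an approximating element of $Q_h$, the error equation $b(\mathbf{v}_h,p-p_h)=(\mathbf{f}-\mathbf{f}_h,\mathbf{v}_h)+a_h(\mathbf{u}_h,\mathbf{v}_h)-a(\mathbf{u},\mathbf{v}_h)$, Lemma~\ref{lemma2} for the load term, and insertion of $\mathbf{u}_\pi$ with the $k$-consistency property~\eqref{eq:consist} plus Theorem~\ref{thm5} for the consistency error. The only cosmetic difference is that you fix $p_I$ as the $L^2$-projection at the outset, whereas the paper works with an arbitrary $q_h\in Q_h$ and passes to the infimum at the end.
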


\begin{proof}
Let $q_h \in Q_h$. From the discrete inf-sup condition \eqref{eq:inf-sup discreta}, we infer:
\begin{equation}
\label{eq:thm4.1}
\tilde{\beta} \|p_h - q_h\|_Q \leq \sup_{\mathbf{v}_h \in \mathbf{V}_h \, \mathbf{v}_h \neq \mathbf{0}} \frac{b(\mathbf{v}_h, p_h - q_h)}{\|\mathbf{v}_h\|_{1}} = \sup_{\mathbf{v}_h \in \mathbf{V}_h \, \mathbf{v}_h \neq \mathbf{0}} \frac{b(\mathbf{v}_h, p_h - p) + b(\mathbf{v}_h,  p - q_h)}{\|\mathbf{v}_h\|_{1}}. 
\end{equation}
Since $(\mathbf{u},p)$ and $(\mathbf{u}_h,p)$ are the solution of \eqref{eq:stokes variazionale} and \eqref{eq:stokes virtual}, respectively, it follows that
\begin{gather*}
a(\mathbf{u}, \mathbf{v}_h) + b(\mathbf{v}_h, p) = (\mathbf{f}, \mathbf{v}_h) \qquad  \text{for all $\mathbf{v}_h \in \mathbf{V}_h$,} \\
a_h(\mathbf{u}_h, \mathbf{v}_h) + b(\mathbf{v}_h, p_h) = (\mathbf{f}_h, \mathbf{v}_h) \qquad  \text{for all $\mathbf{v}_h \in \mathbf{V}_h$.} 
\end{gather*}
Therefore, we get
\begin{equation}
\label{eq:thm4.2}
b(\mathbf{v}_h, p_h - p)  = (\mathbf{f}_h - \mathbf{f}, \mathbf{v}_h) + \left(a(\mathbf{u}, \mathbf{v}_h) - a_h(\mathbf{u}_h, \mathbf{v}_h)  \right) =: \mu_1(\mathbf{v}_h) + \mu_2(\mathbf{v}_h) \qquad  \text{for all $\mathbf{v}_h \in \mathbf{V}_h$.}
\end{equation}
The term $ \mu_1(\mathbf{v}_h)$ can be bounded by making use of Lemma \ref{lemma1}:
\begin{equation}
\label{eq:mu_1bis}
|\mu_1(\mathbf{v}_h)| \leq Ch^k |\mathbf{f}|_{k-1} \|\mathbf{v}_h\|_{1}.
\end{equation}
For the term $ \mu_2(\mathbf{v}_h)$, using \eqref{eq:consist} and the continuity of $a_h(\cdot, \cdot)$ and the triangle inequality, we get:
\[
\begin{split}
\mu_2(\mathbf{v}_h) &= a(\mathbf{u}, \mathbf{v}_h)  - a_h(\mathbf{u}_h, \mathbf{v}_h)  
= \sum_{K \in \mathcal{T}_h} \biggl( a^K(\mathbf{u}, \mathbf{v}_h)  - a_h^K(\mathbf{u}_h, \mathbf{v}_h)\biggr) \\
&= \sum_{K \in \mathcal{T}_h} \biggl( a^K(\mathbf{u} - \mathbf{u}_{\pi}, \mathbf{v}_h)  + 
a_h^K(\mathbf{u}_{\pi} - \mathbf{u}  + \mathbf{u} -\mathbf{u}_h, \mathbf{v}_h) \biggr) \\
& \leq \sum_{K \in \mathcal{T}_h} C \bigl( |\mathbf{u} - \mathbf{u}_{\pi}|_{1,K} + 
|(\mathbf{u}_{\pi} - \mathbf{u})  + (\mathbf{u} -\mathbf{u}_h)|_{1,K}  \bigr) |\mathbf{v}_h|_{1,K} \\
&  \leq  \sum_{K \in \mathcal{T}_h} C  \bigl( |\mathbf{u} - \mathbf{u}_{\pi}|_{1,K} + |\mathbf{u} -\mathbf{u}_h|_{1,K}  \bigr) 
|\mathbf{v}_h|_{1,K} 
\end{split}
\]
where $\mathbf{u}_{\pi}$  is the piecewise polynomial of degree $k$ defined in Lemma \ref{lm:scott}. Then, from estimate~\eqref{eq:scott} and 
Theorem \ref{thm5}, we obtain
\begin{equation}
\label{eq:mu_2bis}
\begin{split}
|\mu_2(\mathbf{v}_h)| &\leq  C h^k \left( |\mathbf{f}|_{k-1} + |\mathbf{u}|_{k+1}\right) \|\mathbf{v}_h\|_{1}.
\end{split}
\end{equation}
Then, combining \eqref{eq:mu_1bis} and \eqref{eq:mu_2bis} in \eqref{eq:thm4.2}, we get
\begin{equation}
\label{eq:thm4.3}
|b(\mathbf{v}_h, p_h - p) | \leq C h^k \left( |\mathbf{f}|_{k-1} + |\mathbf{u}|_{k+1}  \right) \|\mathbf{v}_h\|_{1} .
\end{equation}
Moreover, we have
\begin{equation}
\label{eq:thm4.4}
|b(\mathbf{v}_h, p - q_h) | \leq C \|p - q_h\|_{Q}\|\mathbf{v}_h\|_{1}.
\end{equation}
Then, using \eqref{eq:thm4.3} and \eqref{eq:thm4.4} in \eqref{eq:thm4.1}, we infer

\begin{equation}\label{eq:press-est}
\|p_h - q_h\|_Q \leq C h^k \left( |\mathbf{f}|_{k-1} + |\mathbf{u}|_{k+1} \right) + C  \|p - q_h\|_{Q}.
\end{equation}
Finally, using~\eqref{eq:press-est} and the triangular inequality, we get
\[
\|p -p_h\|_Q \leq \|p -q_h\|_Q + \|p_h - q_h\|_Q \leq C h^k \left( |\mathbf{f}|_{k-1} + |\mathbf{u}|_{k+1}  \right) + C  \|p - q_h\|_{Q} \qquad\forall\,q_h \in Q_h .
\]
Passing to the infimum with respect to $q_h \in Q_h$, and using estimate \eqref{eq: stime q_I}, we obtain~\eqref{eq:pressure-estimate}.
\end{proof}


\section{Reduced spaces and reduced problem}
\label{sec:5}

In this section we show that Problem~\eqref{eq:stokes virtual} is somehow equivalent to a suitable reduced problem (cf. Proposition~\ref{thm6}), involving significant fewer degrees of freedom, especially for large $k$.
Let us define the reduced local virtual spaces:
\begin{multline}
\label{eq:V_h^Kbis}
\widehat{\mathbf{V}}_h^K := \biggl\{  
\mathbf{v} \in [H^1(K)]^2 \quad \text{s.t} \quad \mathbf{v}_{|{\partial K}} \in [\B_k(\partial K)]^2 \, , \biggr.
\\
\left.
\biggl\{
\begin{aligned}
& - \nu \, \boldsymbol{\Delta}    \mathbf{v}  -  \nabla s \in \mathcal{G}_{k-2}(K)^{\perp},  \\
& {\rm div} \, \mathbf{v} \in \Pk_{0}(K),
\end{aligned}
\biggr. \qquad \text{for some $s \in H^1(K)$}
\quad \right\}
\end{multline}
and
\begin{equation}
\label{eq:Q_h^Kbis}
\widehat{Q}_h^K :=  \Pk_{0}(K).
\end{equation}
Moreover, we have:
\begin{equation}
\label{eq:dimensione V_h^Kbis}
\dim\left( \widehat{\mathbf{V}}_h^K \right) = \dim\left([\B_k(\partial K)]^2\right) + \dim\left(\mathcal{G}_{k-2}(K)^{\perp}\right) = 2n_K k + \frac{(k-1)(k-2)}{2},
\end{equation}
and
\begin{equation}
\label{eq:dimensione Q_h^Kbis}
\dim(\widehat{Q}_h^K) = \dim(\Pk_{0}(K))  = 1,
\end{equation}
where $n_K$ is the number of edges in $\partial K$. As sets of degrees of freedom for the reduced spaces, we may consider the following. 

For every function $\mathbf{v} \in \widehat{\mathbf{V}}_h^K$ we take
\begin{itemize}
\item $\mathbf{\widehat{D}_V1}$: the values of $\mathbf{v}$ at each vertex of the polygon $K$,
\item $\mathbf{\widehat{D}_V2}$: the values of $\mathbf{v}$ at $k-1$ distinct points of every edge $e \in \partial K$,
\item $\mathbf{\widehat{D}_V3}$: the moments
\[
\int_K \mathbf{v} \cdot \mathbf{g}_{k-2}^{\perp} \, {\rm d}K \qquad \text{for all $\mathbf{g}_{k-2}^{\perp} \in \mathcal{G}_{k-2}(K)^{\perp}$.}
\]
\end{itemize} 

For every $q \in \widehat{Q}_h$ we consider
\begin{itemize}
\item $\mathbf{\widehat{D}_Q}$: the moment
\[
\int_K q  \, {\rm d}K.
\]
\end{itemize}
We define the global reduced virtual element spaces by setting
\begin{equation}
\label{eq:V_hbis}
\widehat{\mathbf{V}}_h := \{ \mathbf{v} \in [H^1_0(\Omega)]^2 \quad \text{s.t} \quad \mathbf{v}_{|K} \in \widehat{\mathbf{V}}_h^K  \quad \text{for all $K \in \mathcal{T}_h$} \}
\end{equation} 
and
\begin{equation}
\label{eq:Q_hbis}
\widehat{Q}_h := \{ q \in L_0^2(\Omega) \quad \text{s.t.} \quad q_{|K} \in  \widehat{Q}_h^K \quad \text{for all $K \in \mathcal{T}_h$}\}.
\end{equation}
It is easy to check that
\begin{equation}\label{eq:Vdofs-red}
\dim(\widehat{\mathbf{V}}_h) = n_P  \frac{(k-1)(k-2)}{2}+  2 (n_V + (k-1) n_E)
\end{equation}
and
\begin{equation}\label{eq:Qdofs-red}
\dim(\widehat{Q}_h) = n_P - 1
\end{equation}
where we recall that $n_P$ is the number of elements in $\mathcal{T}_h$, $n_E$ and $n_V$ are respectively the number of internal edges and internal vertexes in the decomposition. 

The reduced virtual element discretization of the Stokes problem \eqref{eq:stokes variazionale} is then:
\begin{equation}
\label{eq:stokes reduced virtual}
\left\{
\begin{aligned}
& \text{find $\widehat{\mathbf{u}}_h \in \widehat{\mathbf{V}}_h$ and $\widehat{p}_h \in \widehat{Q}_h$, such that} \\
& a_h(\widehat{\mathbf{u}}_h, \widehat{\mathbf{v}}_h) + b(\widehat{\mathbf{v}}_h, \widehat{p}_h) = (\mathbf{f}_h, \widehat{\mathbf{v}}_h) \qquad & \text{for all $\widehat{\mathbf{v}}_h \in \widehat{\mathbf{V}}_h$,} \\
&  b(\widehat{\mathbf{u}}_h, \widehat{q}_h) = 0 \qquad & \text{for all $\widehat{q}_h \in \widehat{Q}_h$.}
\end{aligned}
\right.
\end{equation}
Above, the bilinear forms $a_h(\cdot, \cdot)$  and $b(\cdot, \cdot)$, and the loading term $ \mathbf{f}_h $  are the same as before, see~\eqref{eq:a_h}, \eqref{bhform} and~\eqref{eq:f_h}. It is easily seen that all the terms involved in~\eqref{eq:stokes reduced virtual} are computable by means of the new reduced degrees of freedom. For example, to compute $(\mathbf{f}_h, \widehat{\mathbf{v}}_h)$ one needs to compute $\Pi_{k-2}^{0, K}  \widehat{\mathbf{v}}_h $, see~\eqref{eq:right}. 
However,  for any $\mathbf{q}_{k-2} \in [\Pk_{k-2}(K)]^2$ we have:  
\[
\int_K  \Pi_{k-2}^{0, K}  \widehat{\mathbf{v}}_h\cdot \mathbf{q}_{k-2} \, {\rm d}K = \int_K    \widehat{\mathbf{v}}_h\cdot \mathbf{q}_{k-2} \, {\rm d}K = 
\int_K    \widehat{\mathbf{v}}_h\cdot \nabla q_{k-1} \, {\rm d}K  + \int_K    \widehat{\mathbf{v}}_h\cdot \mathbf{g}_{k-2}^{\perp} \, {\rm d}K 
\]
for suitable $q_{k-1} \in \Pk_{k-1}(K)$ and $\mathbf{g}_{k-2}^{\perp} \in \mathcal{G}_{k-2}(K)^{\perp}$. 
Then, since ${\rm div} \, \widehat{\mathbf{v}}_h \in \Pk_0(K)$, denoting with $|K|$ the area of $K$, we get
\[
\begin{split}
\int_K  \Pi_{k-2}^{0, K}  \widehat{\mathbf{v}}_h\cdot \mathbf{q}_{k-2} \, {\rm d}K &= -\int_K  {\rm div} \, \widehat{\mathbf{v}}_h \, q_{k-1} \, {\rm d}K + \int_{\partial K} q_{k-1} \widehat{\mathbf{v}}_h \cdot \mathbf{n} \, {\rm d}s + \int_K   \widehat{\mathbf{v}}_h\cdot \mathbf{g}_{k-2}^{\perp} \, {\rm d}K \\
& = -|K|^{-1}\left( \int_{\partial K} \widehat{\mathbf{v}}_h \cdot \mathbf{n} \,{\rm d}s \right)  \int_K    q_{k-1} \, {\rm d}K + \int_{\partial K} q_{k-1} \widehat{\mathbf{v}}_h \cdot \mathbf{n} \, {\rm d}s + \int_K   \widehat{\mathbf{v}}_h\cdot \mathbf{g}_{k-2}^{\perp} \, {\rm d}K 
\end{split}
\]
whose right-hand side is directly computable from $\mathbf{\widehat{D}_V}$.

In addition, using the same techniques of Proposition~\ref{thm2} (take $\pi_h {\bf v} = \bar {\bf v}_h$ in the proof), one can prove that

\begin{equation}
\label{eq:inf-sup reduced discreta}
\exists \, \widehat{\beta} >0 \quad \text{such that} \quad \sup_{\widehat{\mathbf{v}}_h \in \widehat{\mathbf{V}}_h \, \widehat{\mathbf{v}}_h \neq \mathbf{0}} \frac{b(\widehat{\mathbf{v}}_h, \widehat{q}_h)}{ \|\widehat{\mathbf{v}}_h\|_{1}} \geq \widehat{\beta} \|\widehat{q}_h\|_Q \qquad \text{for all $\widehat{q}_h \in \widehat{Q}_h$} .
\end{equation}

The following proposition states the relation between Problem~\eqref{eq:stokes virtual} and the reduced Problem~\eqref{eq:stokes reduced virtual}.
\begin{proposizione}
\label{thm6}
Let $(\mathbf{u}_h, p_h) \in \mathbf{V}_h \times Q_h$ be the solution of problem \eqref{eq:stokes virtual} and $(\widehat{\mathbf{u}}_h, \widehat{p}_h) \in \widehat{\mathbf{V}}_h \times \widehat{Q}_h$ be the solution of problem \eqref{eq:stokes reduced virtual}. Then
\begin{equation}\label{eq:equiv}
\widehat{\mathbf{u}}_h = \mathbf{u}_h   \qquad \text{and} \qquad \widehat{p}_{h|K} = \Pi_0^{0, K} p_h \quad \text{for all $K \in \mathcal{T}_h$.}
\end{equation}
\end{proposizione}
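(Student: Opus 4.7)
My plan is to exploit the fact that the full problem and the reduced one share the \emph{same} discrete divergence-free kernel, which immediately collapses both velocity problems into the same coercive problem on that kernel. First, I would unpack the two divergence constraints. By~\eqref{eq:divfree}, ${\rm div}\,\mathbf{u}_h \in Q_h$, and taking $q_h={\rm div}\,\mathbf{u}_h$ in the second equation of \eqref{eq:stokes virtual} forces ${\rm div}\,\mathbf{u}_h\equiv 0$ pointwise. On the reduced side, ${\rm div}\,\widehat{\mathbf{u}}_h$ is piecewise constant, and testing the second equation of \eqref{eq:stokes reduced virtual} against piecewise constants with zero global mean (together with the global compatibility $\int_\Omega {\rm div}\,\widehat{\mathbf{u}}_h=0$ coming from $\widehat{\mathbf{u}}_h\in [H^1_0(\Omega)]^2$) forces ${\rm div}\,\widehat{\mathbf{u}}_h\equiv 0$ as well.

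Next I would identify $\mathbf{Z}_h$ with $\widehat{\mathbf{Z}}_h$. A velocity $\mathbf{v}\in\mathbf{V}_h^K$ that is divergence-free satisfies $\mathbf{v}|_{\partial K}\in [\B_k(\partial K)]^2$, $-\nu\boldsymbol{\Delta}\mathbf{v}-\nabla s\in\mathcal{G}_{k-2}(K)^\perp$, and ${\rm div}\,\mathbf{v}=0\in\Pk_0(K)$; these are exactly the defining conditions of $\widehat{\mathbf{V}}_h^K$ together with divergence-free. Conversely, a divergence-free function of $\widehat{\mathbf{V}}_h^K$ trivially lies in $\mathbf{V}_h^K$. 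Hence $\mathbf{Z}_h=\widehat{\mathbf{Z}}_h$, and both $\mathbf{u}_h$ and $\widehat{\mathbf{u}}_h$ solve exactly the problem ``find $\mathbf{w}\in \mathbf{Z}_h$ with $a_h(\mathbf{w},\mathbf{v})=(\mathbf{f}_h,\mathbf{v})$ for all $\mathbf{v}\in\mathbf{Z}_h$''. Uniqueness (from the stability of $a_h$) yields $\widehat{\mathbf{u}}_h=\mathbf{u}_h$.

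For the pressure identity, I would note that $\widehat{\mathbf{V}}_h\subseteq\mathbf{V}_h$ since the only difference between the two local spaces is the stronger constraint ${\rm div}\,\mathbf{v}\in\Pk_0(K)\subseteq\Pk_{k-1}(K)$. Testing the first equation of \eqref{eq:stokes virtual} with $\widehat{\mathbf{v}}_h\in\widehat{\mathbf{V}}_h$, subtracting the first equation of \eqref{eq:stokes reduced virtual}, and using $\widehat{\mathbf{u}}_h=\mathbf{u}_h$, one gets
\begin{equation*}
b(\widehat{\mathbf{v}}_h,\, p_h-\widehat{p}_h)=0\qquad\forall\,\widehat{\mathbf{v}}_h\in\widehat{\mathbf{V}}_h .
\end{equation*}
Since ${\rm div}\,\widehat{\mathbf{v}}_h$ is piecewise constant on $\mathcal{T}_h$, on each $K$ one has $\int_K {\rm div}\,\widehat{\mathbf{v}}_h\,(p_h-\widehat{p}_h)\,{\rm d}K=({\rm div}\,\widehat{\mathbf{v}}_h)|_K\,|K|\,(\Pi_0^{0,K}p_h-\widehat{p}_h|_K)$, so defining $r_h\in L^2(\Omega)$ by $r_h|_K:=\Pi_0^{0,K}p_h-\widehat{p}_h|_K$ we obtain $b(\widehat{\mathbf{v}}_h,r_h)=0$ for every $\widehat{\mathbf{v}}_h\in\widehat{\mathbf{V}}_h$. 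Moreover $\int_\Omega r_h=\int_\Omega p_h-\int_\Omega\widehat{p}_h=0$, so $r_h\in\widehat{Q}_h$; applying the reduced inf-sup condition \eqref{eq:inf-sup reduced discreta} gives $r_h=0$, i.e.~\eqref{eq:equiv}.

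The one delicate step is recognizing that the full divergence-free kernel is genuinely identical to the reduced one; this relies on property~\eqref{eq:divfree} (which in turn uses the carefully chosen local space $\mathbf{V}_h^K$). Once this equality of kernels is in hand, both the velocity identity and, via the reduced inf-sup, the pressure identity follow with little further work.
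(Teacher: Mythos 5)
Your proposal is correct and follows essentially the same route as the paper: identify the reduced kernel $\widehat{\mathbf{Z}}_h$ with $\mathbf{Z}_h$ to get $\widehat{\mathbf{u}}_h=\mathbf{u}_h$ from the common kernel problem, then use $\widehat{\mathbf{V}}_h\subseteq\mathbf{V}_h$ and the fact that ${\rm div}\,\widehat{\mathbf{v}}_h$ is piecewise constant to reduce the pressure comparison to the element-wise averages of $p_h$. The only cosmetic difference is that you close the pressure argument by applying the reduced inf-sup condition \eqref{eq:inf-sup reduced discreta} directly to $r_h=\Pi_0^{0,K}p_h-\widehat{p}_h$, whereas the paper shows $(\widehat{\mathbf{u}}_h,p_0)$ solves the reduced problem and invokes uniqueness --- two phrasings of the same fact.
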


\begin{proof}
Let 
\[
\widehat{\mathbf{Z}}_h := \{ \widehat{\mathbf{v}}_h \in \widehat{\mathbf{V}}_h \quad \text{s.t.} \quad b(\widehat{\mathbf{v}}_h, \widehat{q}_h) = 0 \quad \text{for all $\widehat{q}_h \in \widehat{Q}_h$}\}.
\]
Then $\widehat{\mathbf{u}}_h $ solves (cf.~\eqref{eq:stokes nucleo discreto}):
\begin{equation}
\label{eq:red-kern}
\left\{
\begin{aligned}
& \text{find $\widehat{\mathbf{u}}_h \in \widehat{\mathbf{Z}}_h$ } \\
& a_h(\widehat{\mathbf{u}}_h, \widehat{\mathbf{v}}_h) = (\mathbf{f}_h, \widehat{\mathbf{v}}_h) \qquad & \text{for all $\widehat{\mathbf{v}}_h \in \widehat{\mathbf{Z}}_h$. } 
\end{aligned}
\right.
\end{equation}

We now notice that  $\widehat{\mathbf{Z}}_h = {\mathbf{Z}}_h$, see~\eqref{eq:Z_h}. Therefore, Problem \eqref{eq:red-kern} is equivalent to Problem \eqref{eq:stokes nucleo discreto} and $\widehat{\mathbf{u}}_h = \mathbf{u}_h$.

For the pressure component of the solution, from \eqref{eq:stokes virtual} and \eqref{eq:stokes reduced virtual}, we get
\begin{gather}
b(\mathbf{v}_h, p_h) = (\mathbf{f}_h, \mathbf{v}_h) - a_h(\mathbf{u}_h, \mathbf{v}_h) \qquad \text{for all $\mathbf{v}_h \in \mathbf{V}_h$}\label{eq:pheq}\\
b(\widehat{\mathbf{v}}_h, \widehat{p}_h) = (\mathbf{f}_h, \widehat{\mathbf{v}}_h) - a_h(\widehat{\mathbf{u}}_h, \widehat{\mathbf{v}}_h) \qquad \text{for all $\widehat{\mathbf{v}}_h \in \widehat{\mathbf{V}}_h$.}\label{eq:pheqred}
\end{gather}
Let $p_h =: p_0 + p^{\perp}$, 
where $p_{0|K} = \Pi_0^{0,K} p_h$ for all $K \in \mathcal{T}_h$, and $p^{\perp}:= p_h - p_0$ (hence $\int_K  p^{\perp} \, {\rm d}K =0$). From \eqref{eq:pheq}, we have
\[
b(\mathbf{v}_h,p_0 + p^{\perp}) = (\mathbf{f}_h, \mathbf{v}_h) - a_h(\mathbf{u}_h, \mathbf{v}_h) \qquad \text{for all $\mathbf{v}_h \in \mathbf{V}_h$}.
\]
Since $\widehat{\mathbf{V}}_h\subseteq \mathbf{V}_h$, we deduce
\[
b(\widehat{\mathbf{v}}_h,p_0 ) + b(\widehat{\mathbf{v}}_h,  p^{\perp}) = (\mathbf{f}_h, \widehat{\mathbf{v}}_h) - a_h(\mathbf{u}_h, \widehat{\mathbf{v}}_h) \qquad \text{for all $\widehat{\mathbf{v}}_h \in \widehat{\mathbf{V}}_h$.}
\]
Now, $b(\widehat{\mathbf{v}}_h,  p^{\perp})=0$ because ${\rm div}\, \widehat{\mathbf{v}}_h$ is constant on each polygon $K$. We conclude that
\begin{equation}\label{eq:almost-done}
b(\widehat{\mathbf{v}}_h,p_0 )  = (\mathbf{f}_h, \widehat{\mathbf{v}}_h) - a_h(\mathbf{u}_h, \widehat{\mathbf{v}}_h) \qquad \text{for all $\widehat{\mathbf{v}}_h \in \widehat{\mathbf{V}}_h$.}
\end{equation} 
From \eqref{eq:almost-done} and recalling that $\mathbf{u}_h = \widehat{\mathbf{u}}_h $, we get that $(\widehat{\mathbf{u}}_h, p_0)\in \widehat{\mathbf{V}}_h \times \widehat{Q}_h $ solves Problem \eqref{eq:stokes reduced virtual}. Uniqueness of the solution of Problem~\eqref{eq:stokes reduced virtual} then implies $ \widehat{p}_{h|K} = p_{0|K}$ for every $K$, and \eqref{eq:equiv} is proved.

\end{proof}

\begin{osservazione}
\label{oss:dofs}
Proposition \ref{thm6} allows us to solve the Stokes Problem \eqref{eq:stokes variazionale} directly by making use of the reduced problem \eqref{eq:stokes reduced virtual}, saving $n_P ((k+1)k - 2)$ degrees of freedom, see~\eqref{eq:Vdofs}, \eqref{eq:Qdofs}, \eqref{eq:Vdofs-red} and~\eqref{eq:Qdofs-red}. In Table \ref{tabledof} we display this quantity (with respect the total amount of the original DoFs) for the sequences of meshes introduced in Section \ref{sec:6} with $k=2,3,4,5$ in order to have an estimate of the saving in the reduced linear system with respect its original size.
\begin{table}[!h]
\centering
\begin{tabular}{ll*{4}{c}}
\toprule
& &$k=2$ & $k=3$ & $k=4$ & $k=5$\\
\midrule
\multirow{4}*{$\mathcal{V}_h$}
& $h = 1/4$   & $34.408\%$ & $43.715\%$ &  $48.484\%$ &  $51.494\%$ \\
& $h = 1/8$   & $30.260\%$ & $39.506\%$ &  $44.547\%$ &  $47.863\%$ \\
& $h = 1/16$ & $28.460\%$ & $37.624\%$ &  $42.753\%$ &  $46.185\%$  \\
& $h = 1/32$ & $27.634\%$ & $36.749\%$ &  $41.911\%$ &  $45.392\%$ \\
\midrule
\multirow{4}*{$\mathcal{T}_h$}
& $h = 1/2$   & $49.230\%$ & $56.737\%$ &  $59.751\%$ &  $61.369\%$ \\
& $h = 1/4$   & $47.761\%$ & $55.427\%$ &  $58.616\%$ &  $60.377\%$ \\
& $h = 1/8$  & $45.937\%$ & $53.889\%$ &  $57.314\%$ &  $59.253\%$  \\
& $h = 1/16$ & $45.171\%$ & $53.243\%$ &  $56.767\%$ &  $58.780\%$ \\
\midrule
\multirow{4}*{$\mathcal{Q}_h$}
& $h = 1/4$   & $43.835\%$ & $52.287\%$ &  $56.031\%$ &  $58.181\%$ \\
& $h = 1/8$   & $39.875\%$ & $48.706\%$ &  $52.892\%$ &  $55.411\%$ \\
& $h = 1/16$ & $38.066\%$ & $47.041\%$ &  $51.417\%$ &  $54.098\%$  \\
& $h = 1/32$ & $37.202\%$ & $46.238\%$ &  $50.701\%$ &  $53.458\%$ \\
\bottomrule
\end{tabular}
\caption{Percentage saving of DoFs in the reduced problem with respect the original one.}
\label{tabledof}
\end{table}
\end{osservazione}

In addition, we remark that Proposition~\ref{thm6} holds not only when homogeneous Dirichlet conditions are applied on the whole boundary, 
but also for other (possibly non-homogeneous) boundary conditions, as numerically shown in Section~\ref{sec:6}.

\begin{osservazione}
\label{oss:equiv}
It is possible to give an alternative proof of Proposition \ref{thm6}  directly in terms of the associated linear system. 
Furthermore, it is also possible to  implement the  ``reduced'' Problem \eqref{eq:stokes reduced virtual} by coding the 
``complete'' Stokes Problem \eqref{eq:stokes virtual} and locally removing 
the rows and the columns relative to the extra degrees of freedom.
We detail these aspects in the next section.
\end{osservazione}

\begin{osservazione}
\label{oss:postproc}
Given the solution of \eqref{eq:stokes reduced virtual}, if one is interested in a more accurate pressure, the discrete scalar field $p_h$ 
can be recovered by an element-wise post processing procedure. Such local problems can be, for instance, immediately extracted from the removed 
rows and columns mentioned in Remark \ref{oss:equiv}.  
\end{osservazione}

\subsection{Algebraic aspects}
\label{sub:preprin}

It is possible to get the result of Proposition \ref{thm6} also using a matrix point of view. 
Let us first introduce $\boldsymbol{\alpha}:=(\alpha_1, \alpha_2)$ and $|\boldsymbol{\alpha}| = \alpha_1 + \alpha_2$.   
We denote with  $\{m^K_{\boldsymbol{\alpha}}\}$, for all $K \in \mathcal{T}_h$ and for $|\boldsymbol{\alpha}|=0, \dots, k-1$, 
an orthonormal basis in $L^2(K)$ of $\Pk_{k-1}(K)$. Then 
\[
\mathcal{M}_{k-1} := \{m^K_{\boldsymbol{\alpha}} \quad \text{for $|\boldsymbol{\alpha}|=0, \dots, k-1$ and  $K \in \mathcal{T}_h$ }\}
\] 
is an orthonormal basis for $Q_h$ and 
\[
\widehat{\mathcal{M}}_{0} := \{m^K_{\boldsymbol{0}} \quad \text{for $K \in \mathcal{T}_h$}\}
\] 
is a basis for $\widehat{Q}_h$. 

Now, introduce the global basis functions:
\begin{itemize}
\item $\boldsymbol{\phi}^{i_1}_1$ for $i_1=1, \dots, 2n_V$, which correspond to the degrees of freedom of type $\mathbf{D_V1}$, 
\item $\boldsymbol{\phi}^{i_2}_2$ for $i_2=1, \dots, 2(k-1)n_E$, which correspond to the degrees of freedom of type $\mathbf{D_V2}$,  
\item $\boldsymbol{\phi}^{i_3}_3$ for $i_3=1, \dots, n_P\frac{(k-1)(k-2)}{2}$, which correspond to the degrees of freedom of type $\mathbf{D_V3}$,   
\item $\boldsymbol{\phi}^{i_4}_4$ for $i_4=1, \dots, n_P\left(\frac{(k+1)k}{2} - 1\right)$  which correspond to the degrees of freedom 
of type $\mathbf{D_V4}$, selecting 
\[
\int_K {\rm div} \,\mathbf{v} \, m^K_{\boldsymbol{\alpha}} \, {\rm d}K \qquad \text{for all $m^K_{\boldsymbol{\alpha}} \in \mathcal{M}_{k-1} / \widehat{\mathcal{M}}_{0} $.}
\] 
for all $K \in \mathcal{T}_h$.
\end{itemize}
Then 
\[
\{\boldsymbol{\phi}^{i_1}_1, \boldsymbol{\phi}^{i_2}_2, \boldsymbol{\phi}^{i_3}_3, \boldsymbol{\phi}^{i_4}_4 \quad | \quad i_1, i_2, i_3, i_4 \}
\] 
is a basis for $\mathbf{V}_h$ and 
\[
\{\boldsymbol{\phi}^{i_1}_1, \boldsymbol{\phi}^{i_2}_2, \boldsymbol{\phi}^{i_3}_3\quad | \quad i_1, i_2, i_3\}
\]
is a basis for $\widehat{\mathbf{V}}_h$. 
Next, define the global stiffness matrix $\mathbf{S}$ as
\[
(\mathbf{S})_{i_j, i_l} := (S_{j,k})_{i_j, i_l} = a_h(\boldsymbol{\phi}^{i_j}_j, \boldsymbol{\phi}^{i_l}_l) \qquad \text{for $j,l=1,\dots, 4$}
\]
and the  matrix $\mathbf{B}$ as
\[
(\mathbf{B})_{\boldsymbol{\alpha}, i_j} := b(\boldsymbol{\phi}^{i_j}_j, m_{\boldsymbol{\alpha}}) \qquad \text{for $|\boldsymbol{\alpha}|=0, \dots, k$ and $j=1,\dots, 4$.}
\]
Using the definition of $m_{\boldsymbol{\alpha}}$ and $\boldsymbol{\phi}^{i_j}_j$, we observe that the algebraic 
formulation of \eqref{eq:stokes virtual}, reads as follows
\begin{equation}
\label{eq:stokes matrix}
\left[\begin{array}{c c c  c|c c}
S_{11}&S_{12}&S_{13}&S_{14}&B_1^T&0\\

S_{12}^T&S_{22}&S_{23}&S_{24}&B_2^T&0\\

S_{13}^T&S_{23}^T&S_{33}&S_{34}&0&0\\

S_{14}^T&S_{24}^T&S_{34}^T&S_{44}&0&I\\
\hline
B_1&B_2&0&0&0&0\\
 
0&0&0&I&0&0\\
\end{array}\right]
\left[\begin{array}{c}
\mathbf{u}_1 \\
\mathbf{u}_2 \\
\mathbf{u}_3 \\
\mathbf{u}_4 \\
\hline
p_0 \\
p^{\perp} \\
\end{array}\right]
=
\left[\begin{array}{c}
\mathbf{f}_{h,1} \\
\mathbf{f}_{h,2} \\
\mathbf{f}_{h,3} \\
\mathbf{f}_{h,4} \\
\hline
0\\
0\\
\end{array}\right] ,
\end{equation}
whereas the matrix form of problem \eqref{eq:stokes reduced virtual} is   
\begin{equation}
\label{eq:stokes reduced matrix}
\left[\begin{array}{c c  c|c}
S_{11}&S_{12}&S_{13}&B_1^T\\

S_{12}^T&S_{22}&S_{23}&B_2^T\\

S_{13}^T&S_{23}^T&S_{33}&0\\

\hline
B_1&B_2&0&0\\
\end{array}\right]
\left[\begin{array}{c}
\widehat{\mathbf{u}}_1 \\
\widehat{\mathbf{u}}_2 \\
\widehat{\mathbf{u}}_3 \\
\hline
\widehat{p}_0 \\
\end{array}\right]
=
\left[\begin{array}{c}
\mathbf{f}_{h,1} \\
\mathbf{f}_{h,2} \\
\mathbf{f}_{h,3} \\
\hline
0\\
\end{array}\right] .
\end{equation}         
From the last row of \eqref{eq:stokes matrix}  we get $\mathbf{u}_4 = \mathbf{0}$. Therefore \eqref{eq:stokes matrix} 
and \eqref{eq:stokes reduced matrix} are equivalent. In particular 
\[
\mathbf{u}_4 = \mathbf{0} \quad \text{and} \quad \mathbf{u}_j = \widehat{\mathbf{u}}_j \quad \text{for $j=1,2,3$} 
\qquad \text{imply} \qquad \mathbf{u}_h = \widehat{\mathbf{u}}_h.
\]
Moreover, by definition of $\widehat{\mathcal{M}}_0$, $p_0 = \widehat{p}_0$ implies $\widehat{p}_h = \Pi^{0}_0 p_h$.


\section{Numerical tests}
\label{sec:6}

In this section we present two numerical experiments to test the actual performance of the method. In the first test we compare the 
reduced method introduced in Section \ref{sec:5} with the method presented in~\cite{VEM-elasticity} (cf. Remark \ref{oss:3.1}). 
In the second experiment we  investigate numerically the equivalence proved in Proposition \ref{thm6}, considering the more general case of 
non-homogeneous 
boundary conditions.

As usual in the VEM framework, to compute discretization errors we compare the obtained numerical solution with a suitable VEM interpolation of
the analytical solution $\mathbf{u}$. More precisely, we define $\mathbf{u}^I \in \mathbf{V}_h$ by imposing  
\begin{equation}\label{eq:intdef}
\mathbf{D_V}(\mathbf{u}^I) = \mathbf{D_V}(\mathbf{u}) .
\end{equation}
In the same way, we can define the interpolant of $\mathbf{u}$ in $\widetilde{\mathbf{V}}_h$ 
by making use of the degrees of freedom $\mathbf{\widetilde{D}_V}$ (see Remark \ref{oss:3.1}), and in $\widehat{\mathbf{V}}_h$ by making use of 
the degrees of freedom $\mathbf{\widetilde{D}_V}$ (see Section \ref{sec:5}).
Analogously, for the pressure $p$ the interpolant $p^I\in Q_h$ (resp., $\widehat{Q}_h$) is defined by insisting that 
$\mathbf{D_Q}(p^I) = \mathbf{D_Q}(p)$ (resp., $\mathbf{\widehat{D}_Q}(p^I) =\mathbf{\widehat{D}_Q}(p)$).

In our tests  the computational domain is $\Omega= [0,1] ^2$, and it is partitioned using the following sequences of polygonal meshes:
\begin{itemize}
\item $\{ \mathcal{V}_h\}_h$: sequence of Voronoi meshes with $h=1/4, 1/8, 1/16, 1/32$,
\item $\{ \mathcal{T}_h\}_h$: sequence of triangular meshes with $h=1/2, 1/4, 1/8, 1/16$,
\item $\{ \mathcal{Q}_h\}_h$: sequence of square meshes with $h=1/4, 1/8, 1/16, 1/32$.
\end{itemize}
An example of the adopted meshes is shown in Figure \ref{Figure1}.  
\begin{figure}[!h]
\centering
\includegraphics[scale=0.35]{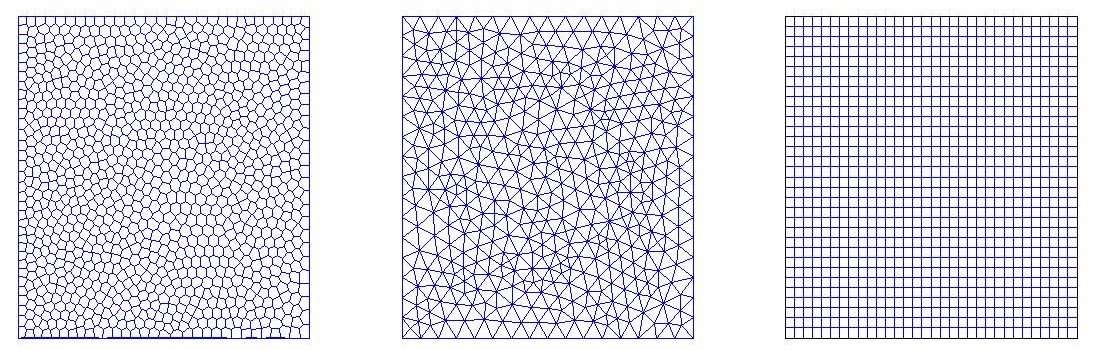}
\caption{Example of polygonal meshes: $\mathcal{V}_{1/32}$,  $\mathcal{T}_{1/16}$, $\mathcal{Q}_{1/32}$.}
\label{Figure1}
\end{figure}
For the generation of the Voronoi meshes we used the code Polymesher \cite{TPPM12}. In the tests we set $\nu =1$.

\begin{test}
In this example, we apply homogeneous boundary conditions on the whole $\partial \Omega$, and we choose 
the load term $\mathbf{f}$ in such a way that the analytical solution is 
\[
\mathbf{u}(x,y) =  \begin{pmatrix}
-\frac{1}{2} \, \cos^2(x) \cos(y) \sin(y) \\
\frac{1}{2} \, \cos^2(y) \cos(x) \sin(x)
\end{pmatrix} \qquad 
p(x,y) = \sin(x) - \sin(y).
\]

We consider the error quantities:
\begin{equation}\label{eq:errqnts}
\delta(\mathbf{u}) := \frac{|\mathbf{u}^I - \mathbf{u}_h|_{1,h}}{|\mathbf{u}^I|_{1,h}} \qquad \text{and} 
\qquad \delta(p) := \frac{\|p^I - p_h\|_{L^2}}{\|p^I\|_{L^2}}
\end{equation}
where $|\cdot|_{1,h}$ denotes the norm induced by the discrete bilinear form $a_h(\cdot, \cdot)$.

We compare two different methods, by studying $\delta(\mathbf{u})$ and $\delta(p)$ versus the total number of degrees of freedom $N_{dof}$. 
The first method is the reduced scheme of Section \ref{sec:5} (labeled as ``\text{new}''), with the post-processed pressure of Remark \ref{oss:postproc}.
The second method is the scheme of ~\cite{VEM-elasticity} extended to the Stokes problem (see Remarks \ref{oss:3.1} and \ref{oss:inf-supclassic}), labeled as ``classic''. In both cases we consider polynomial degrees $k=2,3$.

In Figure \ref{Figure2} and \ref{Figure3}, we display the results for the sequence of Voronoi    
meshes $\mathcal{V}_h$. In Figure \ref{Figure4} and \ref{Figure5}, we show the results for the sequence of meshes $\mathcal{T}_h$, while 
in Figure \ref{Figure6} and \ref{Figure7} we plot the results for the sequence of 
meshes $\mathcal{Q}_h$.
 
\begin{figure}[!h]
\centering
\includegraphics[scale=0.3]{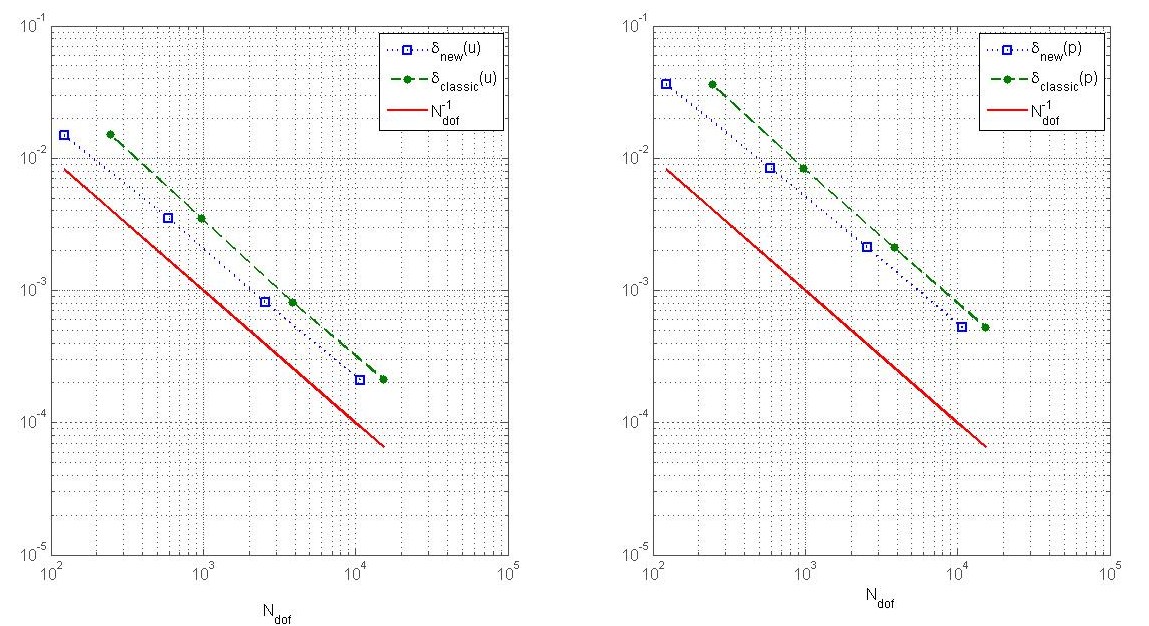}
\caption{Behaviour of $\delta(\mathbf{u}) $ and $\delta(p)$ for the sequence of meshes $\mathcal{V}_h$ with $k=2$.}
\label{Figure2}
\end{figure}

\begin{figure}[!h]
\centering
\includegraphics[scale=0.3]{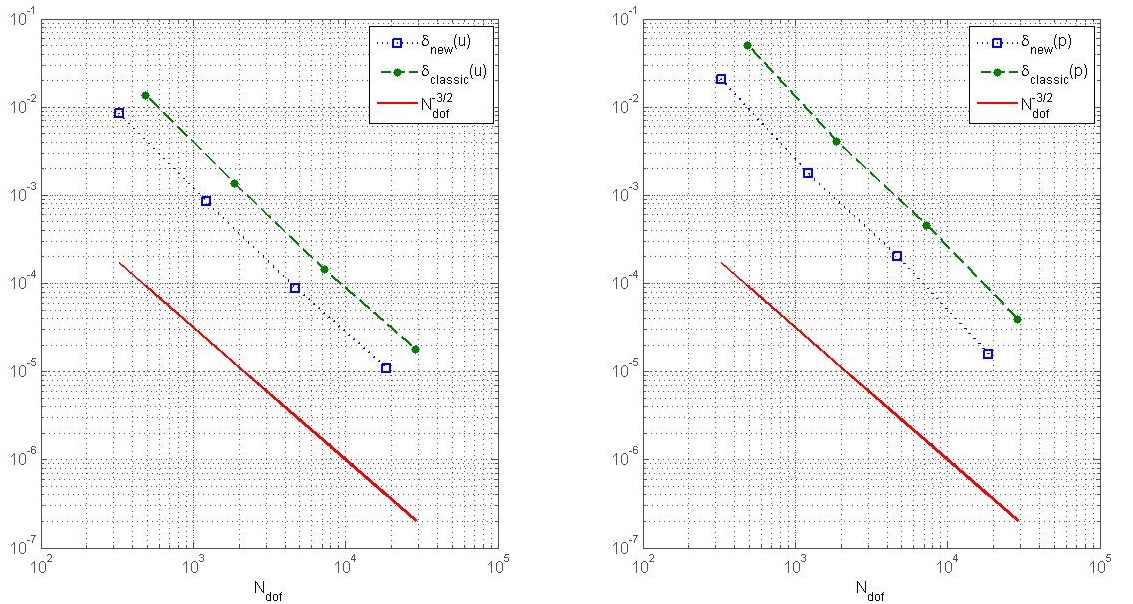}
\caption{Behaviour of $\delta(\mathbf{u}) $ and $\delta(p)$ for the sequence of meshes $\mathcal{V}_h$ with $k=3$.}
\label{Figure3}
\end{figure}

\begin{figure}[!h]
\centering
\includegraphics[scale=0.3]{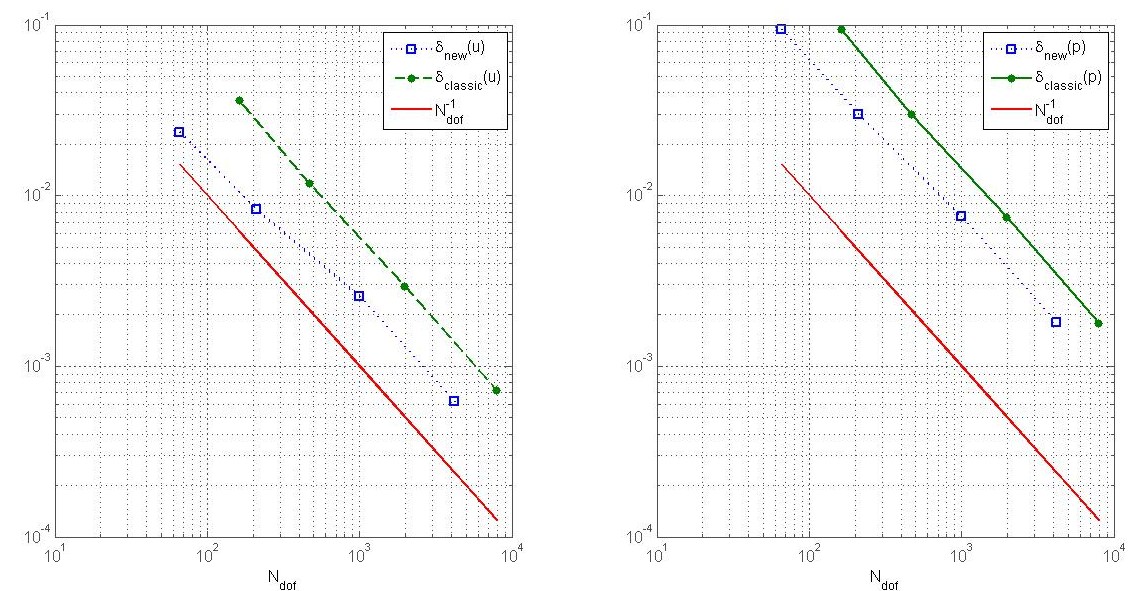}
\caption{Behaviour of $\delta(\mathbf{u}) $ and $\delta(p)$ for the sequence of meshes $\mathcal{T}_h$ with $k=2$.}
\label{Figure4}
\end{figure}

\begin{figure}[!h]
\centering
\includegraphics[scale=0.3]{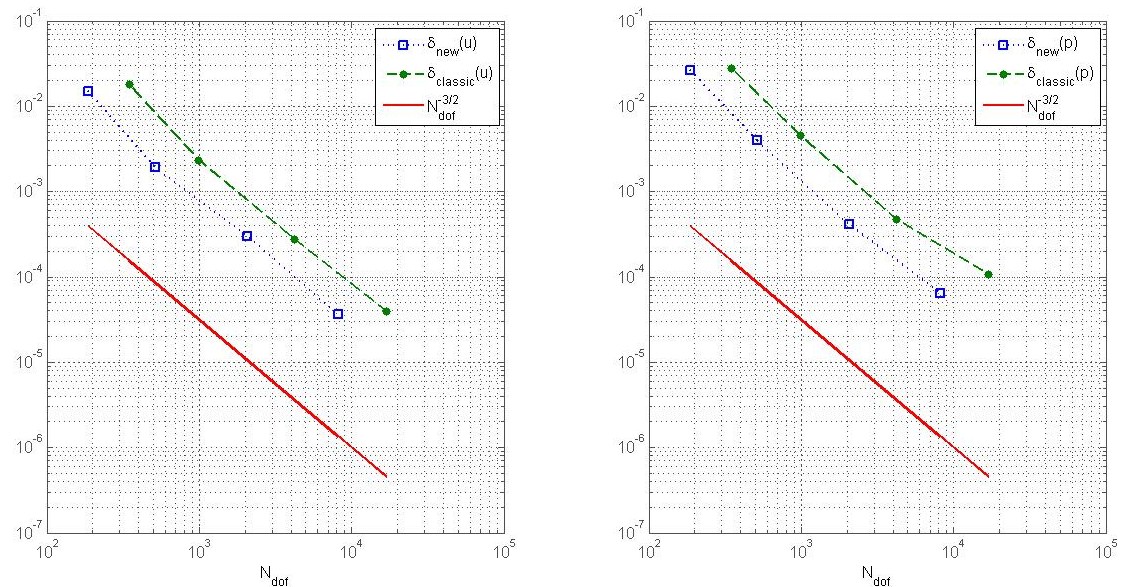}
\caption{Behaviour of $\delta(\mathbf{u}) $ and $\delta(p)$ for the sequence of meshes $\mathcal{T}_h$ with $k=3$.}
\label{Figure5}
\end{figure}

\begin{figure}[!h]
\centering
\includegraphics[scale=0.3]{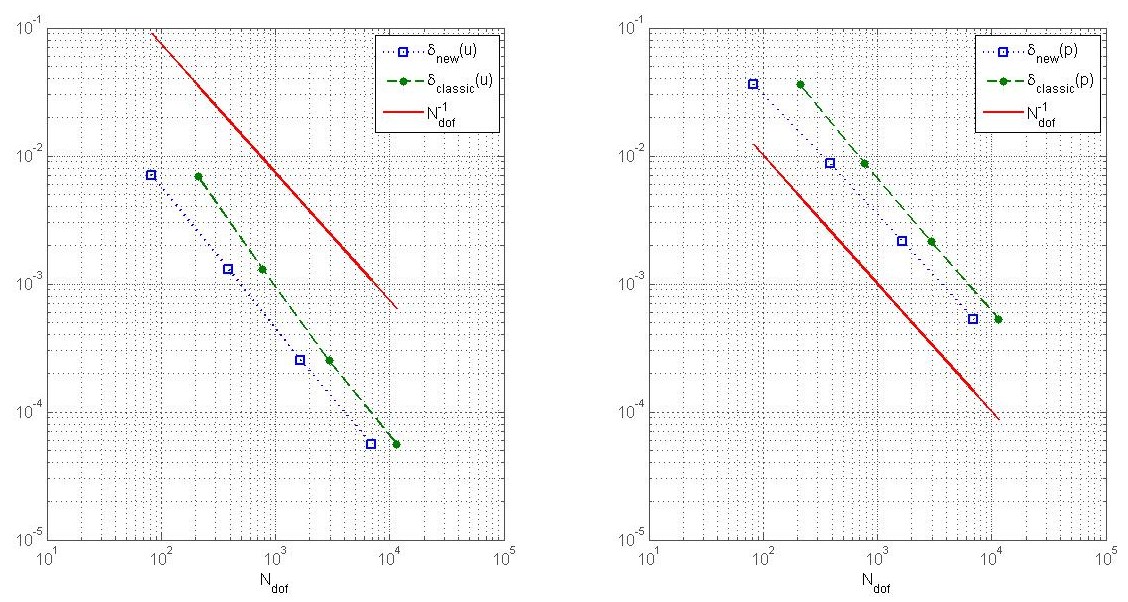}
\caption{Behaviour of $\delta(\mathbf{u}) $ and $\delta(p)$ for the sequence of meshes $\mathcal{Q}_h$ with $k=2$.}
\label{Figure6}
\end{figure}

\begin{figure}[!h]
\centering
\includegraphics[scale=0.3]{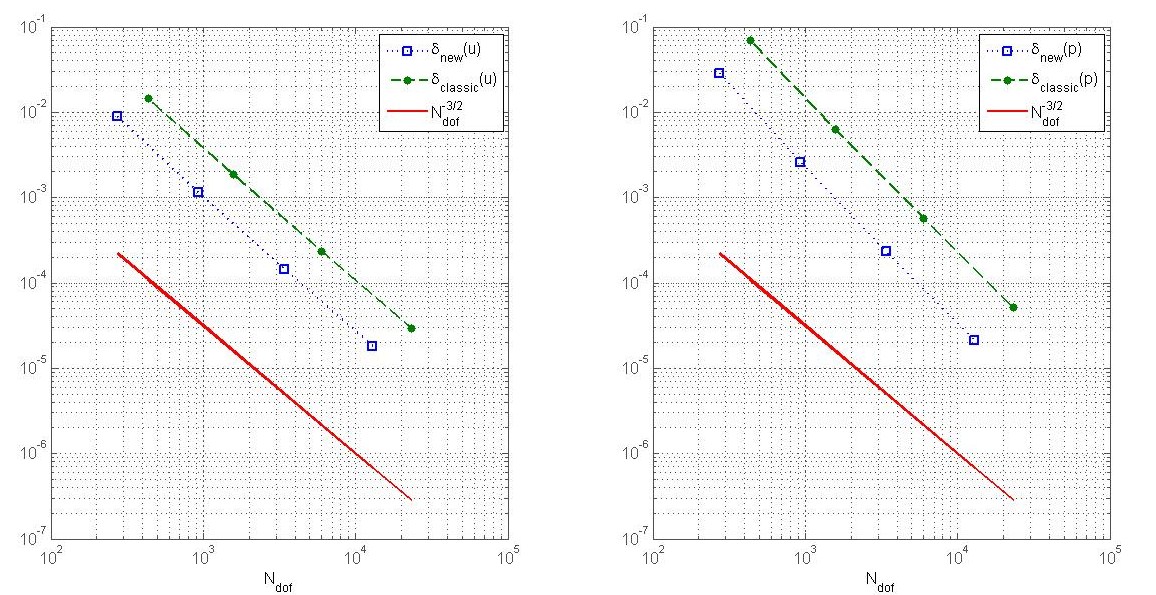}
\caption{Behaviour of $\delta(\mathbf{u}) $ and $\delta(p)$ for the sequence of meshes $\mathcal{Q}_h$ with $k=3$.}
\label{Figure7}
\end{figure}

We notice that the theoretical predictions of Sections \ref{sec:4} and \ref{sec:5} are confirmed (noticed that the method error and $N_{dof}$ behave 
like $h^k$ and $h^{-2}$, respectively).
Moreover, we observe that the reduced method exhibit significant smaller errors than the standard method, at least for this example and with 
the adopted meshes.

\end{test}

\begin{test}
In this example we choose the load term $\mathbf{f}$ and the 
{\em non-homogeneous} polynomial Dirichlet boundary conditions in such a way that the analytical solution is  
\[
\mathbf{u}(x,y) =  \begin{pmatrix}
y^4 + 1\\
x^4 + 2
\end{pmatrix} \qquad 
p(x,y) = x^3 - y^3.
\]

Aim of this test is to check numerically the results of Theorem \ref{thm6}; in order to be more general, we consider the case of non-homogeneous boundary conditions. 
Let $(\mathbf{u}_h, p_h)$ be the solution of Problem \eqref{eq:stokes virtual} and $(\mathbf{\widehat u}_h, \widehat{p}_h)$ be the solution 
of Problem \eqref{eq:stokes reduced virtual}. As a measure of discrepancy between the two solutions, we introduce the parameters
\[
\epsilon(\mathbf{u}) :=  |\widehat I_h \mathbf{u}_h - \mathbf{\widehat u}_h|_{1,h} \qquad  \epsilon(p) := \left(
\sum_{K \in \mathcal{T}_h} \left \| \Pi_0^{0,K} p_h - \widehat{p}_h \right \|_{0,K}^2 \right)^{1/2} ,
\]
where $\widehat I_h \mathbf{u}_h$ denotes the interpolant of $ \mathbf{u}_h$ with respect to the reduced space $\mathbf{\widehat V}_h $,
according to the precedure detailed in \eqref{eq:intdef} and subsequent discussion. 


%
In Table \ref{tabletest} we display the values of $\epsilon(\mathbf{u})$ and $\epsilon(p)$ for the family of meshes $\mathcal{V}_h$, $\mathcal{T}_h$ and $\mathcal{Q}_h$, choosing $k=2,3$. The values of $\epsilon(\mathbf{u})$ and $\epsilon(p)$ confirm the equivalence results provided by Proposition \ref{thm6}. 

\begin{table}[!h]
\centering
\begin{tabular}{ll*{4}{c}}
\toprule
& &\multicolumn{2}{c}{$k=2$} & \multicolumn{2}{c}{$k=3$}\\
\midrule
&           & $\epsilon(\mathbf{u})$   & $\epsilon(p)$      & $\epsilon(\mathbf{u})$  & $\epsilon(p)$        \\
\midrule
\multirow{4}*{$\mathcal{V}_h$}
& $h = 1/4$   &$1.0924681e-13$ &$1.2397027e-13$ &  $2.7665347e-11$ &  $9.5750683e-13$ \\
& $h = 1/8$   &$3.3325783e-13$  & $1.7037760e-13$ &  $2.8147458e-11$ &  $6.4888535e-13$ \\
& $h = 1/16$ &$8.7031014e-13$ &$5.3823612e-13$ &  $3.1718526e-11$ &  $1.5761308e-12$  \\
& $h = 1/32$ &$1.9942180e-12$ &$5.2896229e-13$ &  $7.1270772e-11$ &  $9.7059278e-12$ \\
\midrule
\multirow{4}*{$\mathcal{T}_h$}
& $h = 1/2$   &$1.4647227e-13$ & $2.7158830e-14$ &  $1.4152187e-11$ &  $9.5716694e-13$ \\
& $h = 1/4$   &$4.0200859e-13$ & $6.9691214e-14$ &  $2.3329780e-11$ &  $4.8537564e-13$ \\
& $h = 1/8$   &$1.2058309e-12$ & $1.0109968e-13$ &  $9.7206675e-11$ &  $8.0028046e-12$  \\
& $h = 1/16$ &$2.9427897e-12$ & $2.3636051e-13$ &  $1.9696837e-10$ &  $1.4188633e-11$ \\
\midrule
\multirow{4}*{$\mathcal{Q}_h$}
& $h = 1/4$   &$9.5009907e-14$ & $8.0270859e-14$ &  $1.0298908e-11$ &  $2.1761282e-13$ \\
& $h = 1/8$   &$2.9704999e-13$ & $1.7217954e-13$ &  $4.2678966e-11$ &  $1.5735525e-13$ \\
& $h = 1/16$ &$7.0313002e-13$ & $2.2290502e-13$ &  $2.2776003e-11$  &  $7.9220732e-13$  \\
& $h = 1/32$ &$1.7113467e-12$ & $2.4074145e-13$ &  $6.7792690e-11$ &  $5.9492426e-13$ \\
\bottomrule
\end{tabular}
\caption{$\epsilon(\mathbf{u})$ and $\epsilon(p)$ for the meshes $\mathcal{V}_h$, $\mathcal{T}_h$, $\mathcal{Q}_h$ with $k=2,3$.}
\label{tabletest}
\end{table}

\end{test}

\section{Acknowledgements}
Giuseppe Vacca thanks the National Group of Scientific Computing (GNCS-INDAM) that supported this research through the project:  ``Finanziamento Giovani Ricercatori 2014-2015''.

\addcontentsline{toc}{section}{\refname}
\bibliographystyle{plain}
\bibliography{biblio}

\begin{thebibliography}{10}

\bibitem{projectors}
B.~Ahmad, A.~Alsaedi, F.~Brezzi, L.~D. Marini, and A.~Russo.
\newblock Equivalent projectors for virtual element methods.
\newblock {\em Comput. Math. Appl.}, 66(3):376--391, 2013.

\bibitem{VEM-stream}
P.~F. Antonietti, L.~Beir{\~a}o~da Veiga, D.~Mora, and M.~Verani.
\newblock A stream virtual element formulation of the {S}tokes problem on
  polygonal meshes.
\newblock {\em SIAM J. Numer. Anal.}, 52(1):386--404, 2014.

\bibitem{giani}
P.~F. Antonietti, S.~Giani, and P.~Houston.
\newblock {$hp$}-version composite discontinuous {G}alerkin methods for
  elliptic problems on complicated domains.
\newblock {\em SIAM J. Sci. Comput.}, 35(3):A1417--A1439, 2013.

\bibitem{volley}
L.~Beir{\~a}o~da Veiga, F.~Brezzi, A.~Cangiani, G.~Manzini, L.~D. Marini, and
  A.~Russo.
\newblock Basic principles of virtual element methods.
\newblock {\em Math. Models Methods Appl. Sci.}, 23(1):199--214, 2013.

\bibitem{VEM-elasticity}
L.~Beir{\~a}o~da Veiga, F.~Brezzi, and L.~D. Marini.
\newblock Virtual elements for linear elasticity problems.
\newblock {\em SIAM J. Numer. Anal.}, 51(2):794--812, 2013.

\bibitem{VEM-hitchhikers}
L.~Beir{\~a}o~da Veiga, F.~Brezzi, L.~D. Marini, and A.~Russo.
\newblock The {H}itchhiker's {G}uide to the {V}irtual {E}lement {M}ethod.
\newblock {\em Math. Models Methods Appl. Sci.}, 24(8):1541--1573, 2014.

\bibitem{conforming}
L.~Beir{\~a}o Da~Veiga, F.~Brezzi, L.D. Marini, and A.~Russo.
\newblock ${H}(div)$ and ${H}(curl)$-conforming {VEM}.
\newblock {\em arXiv preprint arXiv:1407.6822}, 2014.

\bibitem{BLM11}
L.~Beir{\~a}o~da Veiga, K.~Lipnikov, and G.~Manzini.
\newblock Arbitrary-order nodal mimetic discretizations of elliptic problems on
  polygonal meshes.
\newblock {\em SIAM J. Numer. Anal.}, 49(5):1737--1760, 2011.

\bibitem{BLM11book}
L.~Beir{\~a}o~da Veiga, K.~Lipnikov, and G.~Manzini.
\newblock {\em The mimetic finite difference method for elliptic problems},
  volume~11 of {\em MS\&A. Modeling, Simulation and Applications}.
\newblock Springer, 2014.

\bibitem{lovadina}
L.~Beir{\~a}o~da Veiga, C.~Lovadina, and D.~Mora.
\newblock A virtual element method for elastic and inelastic problems on
  polytope meshes.
\newblock {\em submitted}, 2015, http://dx.doi.org/xxxxxxxxxxxxxx.

\bibitem{BM13}
L.~Beir{\~a}o~da Veiga and G.~Manzini.
\newblock A virtual element method with arbitrary regularity.
\newblock {\em IMA J. Numer. Anal.}, 34(2):759--781, 2014.

\bibitem{BBPSXX}
M.~F. Benedetto, S.~Berrone, S.~Pieraccini, and S.~Scial{\`o}.
\newblock The virtual element method for discrete fracture network simulations.
\newblock {\em Comput. Methods Appl. Mech. Engrg.}, 280:135--156, 2014.

\bibitem{Bishop13}
J.~E. Bishop.
\newblock A displacement-based finite element formulation for general polyhedra
  using harmonic shape functions.
\newblock {\em Internat. J. Numer. Methods Engrg.}, 97(1):1--31, 2014.

\bibitem{BoffiBrezziFortin}
D.~Boffi, F.~Brezzi, and M.~Fortin.
\newblock {\em Mixed finite element methods and applications}, volume~44 of
  {\em Springer Series in Computational Mathematics}.
\newblock Springer, Heidelberg, 2013.

\bibitem{JA12}
J.~Bonelle and A.~Ern.
\newblock Analysis of compatible discrete operator schemes for elliptic
  problems on polyhedral meshes.
\newblock {\em ESAIM Math. Model. Numer. Anal.}, 48(2):553--581, 2014.

\bibitem{MR2373954}
S.~C. Brenner and L.~R. Scott.
\newblock {\em The mathematical theory of finite element methods}, volume~15 of
  {\em Texts in Applied Mathematics}.
\newblock Springer, New York, third edition, 2008.

\bibitem{BBL09}
F.~Brezzi, A.~Buffa, and K.~Lipnikov.
\newblock Mimetic finite differences for elliptic problems.
\newblock {\em M2AN Math. Model. Numer. Anal.}, 43(2):277--295, 2009.

\bibitem{BFMXX}
F.~Brezzi, R.~S. Falk, and L.~D. Marini.
\newblock Basic principles of mixed virtual element methods.
\newblock {\em ESAIM Math. Model. Numer. Anal.}

\bibitem{BLS05bis}
F.~Brezzi, K.~Lipnikov, and M.~Shashkov.
\newblock Convergence of the mimetic finite difference method for diffusion
  problems on polyhedral meshes.
\newblock {\em SIAM J. Numer. Anal.}, 43(5):1872--1896, 2005.

\bibitem{Brezzi:Marini:plates}
F.~Brezzi and L.~D. Marini.
\newblock Virtual element methods for plate bending problems.
\newblock {\em Comput. Methods Appl. Mech. Engrg.}, 253:455--462, 2013.

\bibitem{PolyDG-1}
A.~Cangiani, E.H. Georgoulis, and P.~Houston.
\newblock hp-version discontinuous galerkin methods on polygonal and polyhedral
  meshes.
\newblock {\em Math. Mod. Meth. Appl. Sci.}, 24(10):2009--2041, 2014.

\bibitem{clement}
P.~Cl{\'e}ment.
\newblock Approximation by finite element functions using local regularization.
\newblock {\em RAIRO Anal. Num{\'e}r.}, 9(R-2):77--84, 1975.

\bibitem{constant}
M.~Costabel and M.~Dauge.
\newblock On the {I}nequalities of {B}abu\v ska--{A}ziz, {F}riedrichs and
  {H}organ--{P}ayne.
\newblock {\em Arch. Ration. Mech. Anal.}, 217(3):873--898, 2015.

\bibitem{DiPietro-Ern-1}
D.~Di~Pietro and A.~Ern.
\newblock A hybrid high-order locking-free method for linear elasticity on
  general meshes.
\newblock {\em Comput. Methods Appl. Mech. Engrg.}, 283(0):1--21, 2015.

\bibitem{duran}
R.~G. Dur{\'a}n.
\newblock An elementary proof of the continuity from {$L^2_0(\Omega)$} to
  {$H^1_0(\Omega)^n$} of {B}ogovskii's right inverse of the divergence.
\newblock {\em Rev. Un. Mat. Argentina}, 53(2):59--78, 2012.

\bibitem{fortin1991mixed}
Michel Fortin and Franco Brezzi.
\newblock {\em Mixed and hybrid finite element methods}.
\newblock New York: Springer-Verlag, 1991.

\bibitem{GTP14}
A.~L. Gain, C.~Talischi, and G.~H. Paulino.
\newblock On the virtual element method for three-dimensional linear elasticity
  problems on arbitrary polyhedral meshes.
\newblock {\em Comput. Methods Appl. Mech. Engrg.}, 282:132--160, 2014.

\bibitem{LMSXX}
K.~Lipnikov, G.~Manzini, and M.~Shashkov.
\newblock Mimetic finite difference method.
\newblock {\em J. Comput. Phys.}, 257:1163--1227, 2014.

\bibitem{mora2015virtual}
D.~Mora, G.~Rivera, and R.~Rodr{\'\i}guez.
\newblock A virtual element method for the steklov eigenvalue problem.
\newblock {\em Math. Models Methods Appl. Sci.}, 25(08):1421--1445, 2015.

\bibitem{MS11}
S.~E. Mousavi and N.~Sukumar.
\newblock Numerical integration of polynomials and discontinuous functions on
  irregular convex polygons and polyhedrons.
\newblock {\em Comput. Mech.}, 47(5):535--554, 2011.

\bibitem{NBM09}
S.~Natarajan, S.~Bordas, and D.~R. Mahapatra.
\newblock Numerical integration over arbitrary polygonal domains based on
  schwarz--christoffel conformal mapping.
\newblock {\em Int. J. Numer. Meth. Eng.}, 80(1):103--134, 2009.

\bibitem{Gillette-1}
A.~Rand, A.~Gillette, and C.~Bajaj.
\newblock Interpolation error estimates for mean value coordinates over convex
  polygons.
\newblock {\em Advances in Computational Mathematics}, 39(2):327--347, 2013.

\bibitem{RW12}
S.~Rjasanow and S.~Wei{\ss}er.
\newblock Higher order bem-based fem on polygonal meshes.
\newblock {\em SIAM J. Numer. Anal.}, 50(5):2357--2378, 2012.

\bibitem{ST04}
N.~Sukumar and A.~Tabarraei.
\newblock Conforming polygonal finite elements.
\newblock {\em Int. J. Numer. Meth. Eng.}, 61(12):2045--2066, 2004.

\bibitem{POLY37}
C.~Talischi and G.~H. Paulino.
\newblock Addressing integration error for polygonal finite elements through
  polynomial projections: a patch test connection.
\newblock {\em Math. Models Methods Appl. Sci.}, 24(8):1701--1727, 2014.

\bibitem{TPPM10}
C.~Talischi, G.~H. Paulino, A.~Pereira, and I.~F.~M. Menezes.
\newblock Polygonal finite elements for topology optimization: a unifying
  paradigm.
\newblock {\em Int. J. Numer. Meth. Eng.}, 82(6):671--698, 2010.

\bibitem{TPPM12}
C.~Talischi, G.~H. Paulino, A.~Pereira, and I.~F.M~. Menezes.
\newblock Polymesher: a general-purpose mesh generator for polygonal elements
  written in matlab.
\newblock {\em Struct. Multidisc Optimiz.}, 45(3):309--328, 2012.

\bibitem{vaccabis}
G.~Vacca and L.~Beir{\~a}o Da~Veiga.
\newblock Virtual element methods for parabolic problems on polygonal meshes.
\newblock {\em Numer. Methods Partial Differential Equations}, 2015, DOI
  10.1002/num.21982.

\bibitem{VW13}
M.~Vohralik and B.~I. Wohlmuth.
\newblock Mixed finite element methods: implementation with one unknown per
  element, local flux expressions, positivity, polygonal meshes, and relations
  to other methods.
\newblock {\em Math. Models Methods Appl. Sci.}, 23(5):803--838, 2013.

\bibitem{Wachspress11}
E.~L. Wachspress.
\newblock Barycentric coordinates for polytopes.
\newblock {\em Comput. Math. Appl.}, 61(11):3319--3321, 2011.

\end{thebibliography}

\end{document}